\pgfplotsset{compat=1.18}
\pgfplotsset{
	table/col sep=tab,
	table/skip first n=1,
}
\definecolor{viridisYellow}{RGB}{253,231,37}
\definecolor{viridisGreen}{RGB}{94,201,98}
\definecolor{viridisTeal}{RGB}{33,145,140}
\definecolor{viridisBlue}{RGB}{59,82,139}
\definecolor{viridisViolet}{RGB}{68,1,84}
\definecolor{matchingRed}{HTML}{D01F3C}
\definecolor{matchingOrange}{HTML}{FFC077}
\colorlet{colork0}{viridisBlue}
\colorlet{colork1}{viridisYellow}
\colorlet{colork2}{viridisGreen}
\colorlet{colorNN}{viridisViolet}
\pgfplotsset{
	DefaultStyle/.style={
		legend cell align={left},
		legend style={font=\footnotesize, fill opacity=1, draw opacity=1, text opacity=1, draw=white!80!black, row sep=-2pt},
		tick align=outside,
		tick pos=left,
		xmajorgrids,
		x grid style={white!69.0196078431373!black},
		xtick style={color=black},
		ymajorgrids,
		y grid style={white!69.0196078431373!black},
		ytick style={color=black},
		ticklabel style={font=\footnotesize},
		label style={font=\footnotesize},
		title style={font=\footnotesize},
		clip mode=individual,
		/tikz/every even column/.append style={column sep=0.1cm},
		colormap/viridis,
	}
}
\newcommand{\slopetriright}[5]{%
	\draw[thick] (axis cs:#3,{10^(#2)*(#3)^(#1)})
	-- (axis cs:#4,{10^(#2)*(#3)^(#1)})
	-- (axis cs:#4,{10^(#2)*(#4)^(#1)}) -- cycle;%
	\node[anchor=west, fill=white, inner sep=1pt, xshift=2pt]  at (axis cs:#4,{sqrt((10^(#2)*(#3)^(#1))*(10^(#2)*(#4)^(#1)))}) {$#5$};%
}
\newcommand{\slopetrileft}[5]{%
	\draw[thick] (axis cs:#3,{10^(#2)*(#3)^(#1)})
	-- (axis cs:#3,{10^(#2)*(#4)^(#1)})
	-- (axis cs:#4,{10^(#2)*(#4)^(#1)}) -- cycle;%
	\node[anchor=east, fill=white, inner sep=1pt, xshift=-2pt]  at (axis cs:#3,{sqrt((10^(#2)*(#3)^(#1))*(10^(#2)*(#4)^(#1)))}) {$#5$};%
}
\renewcommand{\d}[1]{\,\mathrm{d}#1}
\DeclareMathOperator*{\argmin}{arg\,min}
\newcommand{\R}{\mathbb{R}}
\newcommand{\N}{\mathbb{N}}
\newcommand{\normal}{\vec{n}}
\newcommand{\norm}[1]{\left\lVert#1\right\rVert}
\newcommand{\Span}[1]{\operatorname{span}\left(#1\right)}
\newcommand{\coeffs}{\beta}
\newcommand{\coercivity}{\alpha}
\newcommand{\continuity}{\gamma}
\newcommand{\energy}{\mathcal{J}}
\theoremstyle{plain}
\newtheorem{theorem}{Theorem}
\newtheorem{proposition}[theorem]{Proposition}
\newtheorem{lemma}[theorem]{Lemma}
\theoremstyle{remark}
\newtheorem{example}{Example}
\newtheorem{remark}{Remark}
\theoremstyle{definition}
\crefname{lemma}{lemma}{lemma}
\Crefname{lemma}{Lemma}{Lemma}
\begin{document}

\title{Kernel Methods in the Deep Ritz framework: Theory and practice\thanks{Both authors contributed equally to this work.}}

\author[1,2]{Hendrik Kleikamp\,\orcidlink{0000-0003-1264-5941}\thanks{\texttt{hendrik.kleikamp@uni-graz.at}}}
\author[3,4,5]{Tizian Wenzel\thanks{\texttt{wenzel@math.lmu.de}}}

\affil[1]{Institute for Analysis and Numerics, Mathematics M\"{u}nster, University of M\"{u}nster, Einsteinstra\ss{}e~62, 48149 M\"{u}nster, Germany}
\affil[2]{IDea\_Lab, University of Graz, Leechgasse~34, 8010 Graz, Austria}
\affil[3]{Department of Mathematics, Universit\"{a}t Hamburg, Bundesstra\ss{}e~55, 20146 Hamburg, Germany}
\affil[4]{Department of Mathematics, Ludwig Maximilian University of Munich, Theresienstra\ss{}e~39, 80333 Munich, Germany}
\affil[5]{Munich Center for Machine Learning, Munich, Germany}

\date{}

\maketitle

\begin{abstract}
In this contribution, kernel approximations are applied as ansatz functions within the Deep Ritz method.
This allows to approximate weak solutions of elliptic partial differential equations with weak enforcement of boundary conditions using Nitsche's method.
A priori error estimates are proven in different norms leveraging both standard results for weak solutions of elliptic equations and well-established convergence results for kernel methods.
This availability of a priori error estimates renders the method useful for practical purposes where theoretical guarantees are important.
The procedure is described in detail, meanwhile providing practical hints and implementation details.
By means of numerical examples, the performance of the proposed approach is evaluated and the theoretical findings are verified.
We further apply two-layered kernels in a high-dimensional test case, which goes beyond a purely linear method.
To the best of our knowledge, this paper contains the first extensive numerical studies and discussions comparing kernel interpolation, the kernel Galerkin method, the deep Ritz method with kernels and the ``classical'' deep Ritz method with neural networks.
We numerically verify advantages and disadvantages of the particular methods and show under which assumptions the individual approaches are beneficial.
\end{abstract}

\noindent\textbf{Keywords:} Deep Ritz method, kernel methods, energy minimization, partial differential equations, Nitsche's method, a priori error estimation

\smallskip
\noindent\textbf{MSC Classification:} 46E22, 65N30, 68T01, 65-04

\section{Introduction}
Application of different machine learning tools to approximate solutions to partial differential equations~(PDEs) are dating back to the 1990s \cite{lee1990neural,dissanayake1994neural} and have been an active direction of research especially in the past years~\cite{raissi2019physicsinformed,brunton2024promising}.
A particularly prominent example is the~\emph{deep Ritz method}, first introduced in~\cite{e2018deepritz}, which makes use of the equivalent formulation of a variational problem as an energy minimization task.
In the original deep Ritz method, deep neural networks are used as ansatz functions.
\par
In this contribution, we replace the neural networks in the deep Ritz method by kernel methods.
The linear structure of the resulting ansatz space allows for simple yet powerful a priori error estimates and convergence rates.
Such error estimates and convergence rates are only rarely available for neural networks in the deep Ritz context and require extensive analysis, see~\cite{lu2021priori,jiao2023error}.
Besides the theoretical derivations, this paper focuses in particular on extensive numerical experiments comparing several approaches in different problem settings.
\par
Meshless methods gained attention in the past decades to solve several different types of~PDEs, see for instance~\cite{patel2020meshless} and~\cite{belytschko1996meshless} for review articles on this topic.
Compared to classical mesh-based methods such as the finite element method, meshless methods do not require the (potentially complex and costly) meshing of the underlying computational domain.
In the context of kernel approximations and radial basis functions, there are various methods such as symmetric kernel collocation~\cite{franke1998solving}, Kansa method~\cite{Kansa1990b} or radial basis function finite differences~\cite{fornberg2015primer}.
Here we are concerned with the kernel Galerkin approach introduced in~\cite{wendland1999meshless}, which uses kernels as ansatz functions in the Galerkin method to solve variational formulations of elliptic~PDEs.
This approach was further developed and used for instance to solve elliptic and parabolic~PDEs on spheres in~\cite{narcowich2016novel,knemund2019highorder} or to define an energy conservative scheme for nonlinear wave equations in~\cite{sun2022kernelbased}.
However, it turns out that already for elliptic equations, the resulting stiffness matrices suffer from large condition numbers, see for instance~\cite{duan2008note}, and are costly to compute due to the required global numerical integration for every pair of ansatz functions.
\par
In this contribution we propose a possible alternative that can deal with these difficulties.
In particular we show that the error estimates derived in~\cite{wendland1999meshless} are still valid for our method.
We additionally improve the theoretical statements by taking into account the most recent results on approximation errors for kernel methods.
Towards the end of the paper we provide extensive numerical experiments in which we consider problems where the solutions possess different degrees of smoothness.
We investigate in detail the consequences of these different regularity properties when it comes to approximations of the solutions using kernel methods or neural networks.
The observed results agree with the theoretical statements we derived and the convergence rates for different solution regularities match those predicted by the theory.
Moreover, we witness the mentioned issues with large condition numbers of stiffness matrices in practice and show how our proposed approach can circumvent these difficulties.
As mentioned before, we also consider neural networks in the deep Ritz method and discuss in which cases the different techniques are preferable.
It turns out that the performance of the different methods -- neural networks and kernel methods -- depends on the smoothness of the solution, and thus, the choice of the method should take those differences into account.
To the best of the authors knowledge, such an in depth numerical study of the different approaches is not available in the literature so far.
\par
A class of approaches that shares some similarities with the methods discussed in this paper are randomized neural-network and random-feature discretizations for~PDEs:
In this setting, the nonlinear parameters of a neural-network ansatz are typically fixed randomly, while only the linear output coefficients are determined from the~PDE constraints, often through a least-squares or collocation formulation. 
This viewpoint underlies extreme-learning-machine collocation methods for elliptic~PDEs~\cite{calabro2021extreme},
local and domain-decomposition variants~\cite{dong2021local}, 
and random-feature methods that explicitly connect randomized neural networks with more classical numerical~PDE discretizations~\cite{chen2022bridging}. 
Moreover, the efficient and robust solution of the resulting linear systems has also been addressed, 
for instance through overlapping Schwarz preconditioners for randomized neural-network discretizations~\cite{shang2025overlapping}.
In contrast to the kernel-based approach presented here, theoretical results for these methods are typically difficult to obtain.
\par
In order to have a better comparison with deep methods based on neural networks,
we also investigate two-layered kernels~\cite{wenzel2024data}, 
which add a trainable matrix within the kernel function.
With this additional learnable part, the approach becomes nonlinear and the energy minimization problem cannot be replaced by an equivalent linear system anymore.
At the same time, the approximation capabilities are significantly increased compared to ``flat'' kernels, 
which appears to be preferable in a high-dimensional test case presented in our numerical results.
\par
The~\texttt{Python}-source code to reproduce the numerical results from~\Cref{sec:numerical-experiments} is provided in~\cite{sourcecode}. \newline
\par\noindent%
The paper is organized as follows: In~\Cref{sec:theoretical-background}, we provide the theoretical background on weak formulations of~PDEs required for the deep Ritz method.
We additionally recall some basic statements from numerical analysis of Galerkin methods that will be used later to prove a priori estimates in different norms.
Furthermore, we give details on kernel methods and state well-known approximation error estimates for these approaches in Sobolev norms.
Afterwards, in~\Cref{sec:application-kernels-to-deep-ritz}, the details regarding a conjunction of the deep Ritz method and kernel approximation are provided.
In~\Cref{sec:a-priori-error-estimation}, an a priori error estimate for the approach introduced in detail in~\Cref{sec:description-of-method} is proven based on the theoretical foundations from~\Cref{sec:theoretical-background}.
Furthermore, some practical aspects are considered in~\Cref{sec:practical-aspects}.
In~\Cref{sec:numerical-experiments} we show and discuss several numerical experiments that confirm the theoretical results given before and highlight the potential of the proposed methodology.
Finally, \Cref{sec:outlook-conclusion} finishes the paper with a conclusion and an outlook.

\section{Theoretical background}\label{sec:theoretical-background}
Before discussing the application of kernel methods in the deep Ritz approach, 
we will first review key definitions and notations related to the variational formulation of elliptic PDEs and the Dirichlet principle, 
followed by an introduction to kernel methods.
Here, we introduce the problem setting in an abstract and general manner.
The particular formulation employed in our numerical examples will be given in~\Cref{ex:weak-formulation} and is detailed in~\Cref{sec:numerical-experiments}.

\subsection{Variational formulation for elliptic PDEs and energy minimization}\label{sec:variational-formulation-energy-minimization}
Let~$V$ be a Hilbert space, $a\colon V\times V\to\R$ a continuous and coercive bilinear form and~$l\in V'$ a continuous linear functional on~$V$.
To introduce the required notation, we recall that a bilinear form~$a\colon V\times V\to\R$ is called coercive with coercivity constant~$\coercivity>0$ if it holds
\begin{align}\label{equ:definition-coercivity}
	a(u,u)\geq\coercivity\norm{u}_V^2
\end{align}
for all~$u\in V$. The bilinear form~$a$ is further called continuous if
\begin{align}\label{equ:definition-continuity}
	\left|a(u,v)\right|\leq\continuity\norm{u}_V\norm{v}_V
\end{align}
for all~$u,v\in V$.
We consider a general variational formulation of the form
\begin{align}\label{equ:weak-formulation}
	\text{Find }u\in V\text{ such that}\qquad a(u,v) = l(v) \qquad \text{for all }v\in V.
\end{align}
It is well-known that the weak formulation in~\eqref{equ:weak-formulation} is equivalent to the energy minimization problem
\begin{align}\label{equ:energy-functional}
	u = \argmin\limits_{v\in V} \energy(v)\qquad\text{with }\energy(v) \coloneqq \frac{1}{2}a(v,v)-l(v).
\end{align}
This energy minimization formulation is also known as \emph{Dirichlet's principle} and it is valid for general weak formulations with symmetric bilinear form.
The equivalent formulation of an elliptic~PDE in weak form as a minimization problem over the ansatz space~$V$ plays a crucial role in the deep Ritz method.

\begin{remark}[Non-symmetric problems and residual minimization]\label{rem:non-symmetric-problems}
	The energy minimization formulation as presented here is only valid for symmetric problems.
	In a more general setting, one might consider the (weak) residual defined in the dual space instead and perform minimization of the residual.
	The authors in~\cite{uriarte2023deep} propose to this end the ``Deep Double Ritz Method'' which solves the corresponding saddle-point problem by means of two deep Ritz minimizations.
	However, the general idea of considering an optimization problem instead of a weak formulation allows to apply the approach detailed in\Cref{sec:description-of-method} to a larger class of problems.
	The method is in principle applicable to objective functions that do not correspond to the Dirichlet energy in~\eqref{equ:energy-functional}.
	We here focus on energy minimization of elliptic~PDEs since those problems allow for a priori error estimation using tools from numerical analysis.
\end{remark}

We now given an example of a weak formulation for a particular class of problems that will be our guiding example in the numerical experiments in~\Cref{sec:numerical-experiments}. This example might be generalized including for instance reaction terms or different boundary conditions. However, already in this relatively simple setting of a Poisson problem we observe fundamentally different smoothness behaviors of the corresponding solutions depending on the choice of the domain and the boundary conditions, see~\Cref{sec:numerical-experiments} for details. We therefore restrict our attention to a Poisson problem with Dirichlet boundary conditions:
\begin{example}[Weak formulation of a diffusion problem with weak boundary value enforcement]\label{ex:weak-formulation}
	Let~$\Omega\subset\R^d$ for some~$d\in\N$ be a bounded domain.
	We assume that the boundary~$\partial\Omega$ is Lipschitz continuous.
	Let~$f\in L^2(\Omega)$ be the right-hand side and~$g\in H^{1/2}(\partial\Omega)$ the boundary condition.
	We consider as an example the weak formulation of the following Poisson problem
	\begin{equation*}
		\begin{aligned}
			-\Delta u &= f && \text{in }\Omega,\\
			u &= g && \text{on }\partial\Omega.
		\end{aligned}
	\end{equation*}
	By means of Nitsche's method, see~\cite{nitsche1971variationsprinzip}, one can derive a weak formulation with weak enforcement of the boundary condition for the diffusion problem stated above.
	We thus define the symmetric bilinear form~$a\colon V\times V\to\R$ with~$V\coloneqq H^1(\Omega)$ for~$u,v\in V$ as
	\begin{align}\label{equ:definition-bilinear-form}
		a(u,v) \coloneqq \int\limits_\Omega \nabla u\cdot\nabla v \d{x} - \int\limits_{\partial\Omega} (\nabla u\cdot \normal)v\d{s} - \int\limits_{\partial\Omega} (\nabla v\cdot \normal)u\d{s}+C_{\text{pen}}\int\limits_{\partial\Omega} uv\d{s}
	\end{align}
	and the continuous linear functional~$l\colon V\to\R$ for~$v\in V$ as
	\begin{align*}
		l(v) \coloneqq \int\limits_\Omega fv\d{x} - \int\limits_{\partial\Omega} (\nabla v\cdot \normal)g\d{s} + C_{\text{pen}}\int\limits_{\partial\Omega} gv\d{s},
	\end{align*}
	where~$C_\text{pen}>0$ denotes a penalty parameter used to weakly impose the boundary condition.
	
	The bilinear form defined in~\eqref{equ:definition-bilinear-form} is continuous and coercive for a suitable choice of the penalty parameter~$C_\text{pen}$ and,
	thus, the weak formulation in~\eqref{equ:weak-formulation} has a unique solution~$u\in V$ for that choice of the penalty parameter as well,
	see~\cite{benzaken2024constructing,ueda2018infsup} for more details and a thorough derivation.
	In practice, the constant~$C_\text{pen}$ is typically chosen heuristically since the required lower bounds for this constant are usually not known explicitly.
	
	We highlight at this point the differences of our weak formulation to the one considered in~\cite{e2018deepritz}: Besides the penalty term for the weak enforcement of the boundary condition also present in~\cite{e2018deepritz} for homogeneous Dirichlet boundary values, we added a consistency term to the bilinear form and the respective term that leads to a symmetric problem.
	This way, a strong solution of the Poisson problem is also a weak solution and the weak formulation is consistent with the strong formulation.
	The consistency of our formulation therefore enables a comparison to the strong solution in the numerical examples considered in~\Cref{sec:numerical-experiments}, which allows us to confirm the convergence rates predicted by the theory from~\Cref{sec:a-priori-error-estimation}.
	A similar formulation based on Nitsche's method was also considered in~\cite{ming2021deep} in the context of deep Ritz methods.
\end{example}

Later on, we will consider approximations of the weak solution~$u\in V$ in subspaces~$V_h\subset V$ of~$V$.
To analyze the error made by the approximation in the subspace~$V_h$, the following result by C\'{e}a gives an error estimate for the so-called~\emph{Galerkin projection}~$u_h\in V_h$ by means of the best-approximation error in the space~$V_h$.
Subsequently, the best-approximation error will be bounded further in the case of an approximation via kernel methods.
\begin{theorem}[{C\'{e}a's~Lemma; see~\cite{cea1964approximation}}]\label{thm:ceas-lemma}
	Let~$V_h\subset V$ be a closed subspace of~$V$.
	Further, let~$a\colon V\times V\to\R$ be a symmetric, continuous and coercive bilinear form and~$l\in V'$ a continuous linear functional.
	We denote by~$u\in V$ the weak solution in~$V$ such that it holds~$a(u,v)=l(v)$ for all~$v\in V$, and by~$u_h\in V_h$ the weak solution in~$V_h$, i.e. it holds~$a(u_h,v_h)=l(v_h)$ for all~$v_h\in V_h$.
	Then the quasi-best approximation estimate
	\begin{align*}
		\norm{u-u_h}_{V} \leq \sqrt{\frac{\continuity}{\coercivity}}\inf_{v_h\in V_h}\norm{u-v_h}_{V}
	\end{align*}
	holds, where~$\continuity>0$ is the continuity constant, see~\eqref{equ:definition-continuity}, and~$\coercivity>0$ denotes the coercivity constant of the bilinear form~$a$ with respect to the norm on~$V$, see~\eqref{equ:definition-coercivity}.
\end{theorem}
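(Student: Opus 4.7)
The plan is to exploit the symmetry of $a$ by working with the energy (semi-)norm $\|v\|_a \coloneqq \sqrt{a(v,v)}$, which by coercivity and continuity is equivalent to $\|\cdot\|_V$ on $V$. The crucial observation is that while a direct estimate via coercivity and continuity yields a quasi-best approximation constant of $\continuity/\coercivity$, the symmetric case admits the sharper constant $\sqrt{\continuity/\coercivity}$, precisely because $a$ then induces an inner product and $u_h$ is the corresponding orthogonal projection.

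First I would subtract the two variational identities $a(u,v_h)=l(v_h)$ and $a(u_h,v_h)=l(v_h)$ to obtain the Galerkin orthogonality
\begin{equation*}
    a(u-u_h,v_h) = 0 \qquad \text{for all } v_h\in V_h.
\end{equation*}
Since $a$ is symmetric, continuous and coercive, it defines an inner product on $V$, and $\|\cdot\|_a$ is a norm equivalent to $\|\cdot\|_V$ with $\sqrt{\coercivity}\,\|v\|_V \leq \|v\|_a \leq \sqrt{\continuity}\,\|v\|_V$ for all $v\in V$. Galerkin orthogonality then says exactly that $u_h$ is the $a$-orthogonal projection of $u$ onto the closed subspace $V_h$, so for every $v_h\in V_h$ one has $\|u-u_h\|_a \leq \|u-v_h\|_a$.

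Combining these two ingredients for an arbitrary $v_h\in V_h$ gives
\begin{equation*}
    \coercivity\,\norm{u-u_h}_V^2 \leq \norm{u-u_h}_a^2 \leq \norm{u-v_h}_a^2 \leq \continuity\,\norm{u-v_h}_V^2,
\end{equation*}
and taking the square root followed by the infimum over $v_h\in V_h$ yields the claim. The main conceptual step, rather than an obstacle, is recognising that symmetry turns the Galerkin method into an orthogonal projection in the energy inner product, which is what allows the factor $\sqrt{\continuity/\coercivity}$ instead of the naive $\continuity/\coercivity$ one would get from applying coercivity and continuity separately to $a(u-u_h,u-u_h)=a(u-u_h,u-v_h)$. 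Closedness of $V_h$ is used only implicitly to guarantee existence of $u_h$ via Lax--Milgram on $V_h$.
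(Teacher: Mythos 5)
Your proof is correct and is the standard textbook argument for the symmetric form of Céa's Lemma with the sharpened constant $\sqrt{\continuity/\coercivity}$: Galerkin orthogonality plus the fact that $u_h$ is the $a$-orthogonal projection of $u$ onto $V_h$, sandwiched by the norm equivalences $\sqrt{\coercivity}\norm{v}_V\leq\norm{v}_a\leq\sqrt{\continuity}\norm{v}_V$. The paper itself states this theorem as a cited classical result and does not supply a proof, so there is nothing in-paper to compare against; your argument is exactly the one a reader would expect to reconstruct from the reference.
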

Below we will prove an a priori error estimate for the deep Ritz method using kernel approximations with respect to the~$H^1$-norm. 
This result naturally gives rise to an estimate in the~$L^2$-norm since the~$L^2$-norm is bounded from above by the~$H^1$-norm. However, the estimate with respect to the~$L^2$-norm can be improved further using the well-known Aubin-Nitsche Lemma, which will be recalled in the following in a general and abstract setting.
\begin{theorem}[{Aubin-Nitsche Lemma; see~\cite{ciarlet2002finite}}]\label{thm:aubin-nitsche-lemma}
	Let~$V_h\subset V$ be a closed subspace of~$V$ and let~$W$ be a Hilbert space such that the embedding~$V\hookrightarrow W$ is continuous. Further, let the assumptions from~\Cref{thm:ceas-lemma} be fulfilled. Then it holds the estimate
	\begin{align*}
		\norm{u-u_h}_W \leq \continuity\norm{u-u_h}_V\sup\limits_{0\neq w\in W}\frac{\inf\limits_{v_h\in V_h}\norm{v_w-v_h}_V}{\norm{w}_W},
	\end{align*}
	where~$v_w\in V$ is for a given~$w\in W$ defined as the solution of the dual problem
	\begin{align}\label{equ:dual-problem}
		a(\varphi,v_w) = (w,\varphi)_W\qquad\text{for all }\varphi\in V.
	\end{align}
\end{theorem}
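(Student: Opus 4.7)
The plan is to prove the bound by duality, starting from the Riesz representation of the $W$-norm and then converting inner products against $w\in W$ into values of the bilinear form $a$ via the dual problem. The key algebraic identity that drives the argument is Galerkin orthogonality, which will let us subtract an arbitrary element of $V_h$ before invoking continuity.

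First I would check that the dual problem \eqref{equ:dual-problem} is well-posed: for every $w\in W$, the right-hand side $\varphi\mapsto (w,\varphi)_W$ is a continuous linear functional on $V$ by the continuity of the embedding $V\hookrightarrow W$ (combined with Cauchy--Schwarz in $W$), so the hypotheses of \Cref{thm:ceas-lemma} together with Lax--Milgram provide a unique $v_w\in V$. Next I would fix the error $e\coloneqq u-u_h\in V\subset W$ and use the identity
\begin{align*}
\norm{e}_W=\sup_{0\neq w\in W}\frac{(w,e)_W}{\norm{w}_W},
\end{align*}
so the task reduces to estimating $(w,e)_W$ uniformly in $w$.

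The core step is to plug $\varphi=e$ into \eqref{equ:dual-problem}, which yields $(w,e)_W=a(e,v_w)$. Because both $u$ and $u_h$ satisfy the Galerkin equations, we have the orthogonality $a(e,v_h)=0$ for every $v_h\in V_h$, and hence
\begin{align*}
(w,e)_W=a(e,v_w-v_h)\qquad\text{for every }v_h\in V_h.
\end{align*}
Applying the continuity bound \eqref{equ:definition-continuity} gives $|(w,e)_W|\le\continuity\,\norm{e}_V\,\norm{v_w-v_h}_V$, and taking the infimum over $v_h\in V_h$ yields
\begin{align*}
|(w,e)_W|\le\continuity\,\norm{u-u_h}_V\inf_{v_h\in V_h}\norm{v_w-v_h}_V.
\end{align*}
Dividing by $\norm{w}_W$ and taking the supremum over nonzero $w\in W$ produces exactly the claimed estimate.

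I do not anticipate a genuinely hard step: the argument is a standard duality/Aubin--Nitsche chain, and the only points requiring minor care are verifying that the dual problem is solvable for every $w\in W$ (which is where the continuous embedding $V\hookrightarrow W$ is used), and checking that Galerkin orthogonality applies on the discrete side exactly as in \Cref{thm:ceas-lemma}. If one wished to state the conclusion as a product of the usual ``approximation'' and ``regularity'' quantities, one would simply rewrite the supremum over $w$ as a worst-case best-approximation error of dual solutions, but this is purely cosmetic.
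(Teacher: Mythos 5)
Your proof is correct and is the standard duality argument for the Aubin--Nitsche lemma: Riesz duality in $W$, testing the dual problem with the error, Galerkin orthogonality, and continuity of $a$. The paper itself does not include a proof of this statement (it defers to the cited reference), and your argument coincides with the textbook treatment, so there is nothing to reconcile.
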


\subsection{Kernel methods and approximation error estimates}
\label{sec:kernel-methods}

Kernel methods constitute a popular machine learning method and are well-known for a comprehensive theoretical foundation, 
see for instance~\cite{wendland2004scattered} for a detailed overview. 
These methods build in our case around the notion of strictly positive-definite kernels, i.e.~symmetric functions~$k\colon\R^d\times\R^d\to\R$, 
for which the kernel matrix~$\left[k(x_i,x_j)\right]_{i,j=1}^{n}\in\R^{n\times n}$ is positive-definite for all sets of pairwise distinct points~$x_1,\dots,x_n\in\R^d$ and all~$n\in\N$.
\par
From now on, let~$k$ be a radial basis function kernel, 
i.e.~there exists a radial basis function~$\Phi\colon\R^d\to\R$ such that~$k(x,y)=\Phi(x-y)=\varphi(\norm{x-y}_2)$ with~$\varphi\colon\R_{\geq0}\to\R$ for all~$x,y\in\R^d$. 
We assume that for the Fourier transform~$\hat{\Phi}\colon\R^d\to\R$ of~$\Phi$ the algebraic decay property
\begin{align}\label{equ:Fourier-transform-decay}
	c_1(1+\norm{\omega}_2^2)^{-\tau} \leq \hat{\Phi}(\omega) \leq c_2(1+\norm{\omega}_2^2)^{-\tau} \qquad \text{for all }\omega\in\R^d
\end{align}
holds, where~$c_2 > c_1 > 0$ are some constants and~$\tau>d/2$ is the order of the decay. 
\par
For any strictly positive definite kernel~$k$, there exists a unique reproducing kernel Hilbert space (RKHS)~$\mathcal{H}_k$ of functions, the so-called native space.
The~RKHS is defined by the reproducing property, i.e.~that it holds~$\langle f,k(\,\cdot\,,x)\rangle_{\mathcal{H}_k}=f(x)$ for all~$f\in\mathcal{H}_k$ and all~$x\in\R^d$, where~$\langle\,\cdot\,,\,\cdot\,\rangle_{\mathcal{H}_k}\colon\mathcal{H}_k\times\mathcal{H}_k\to\R$ denotes the inner product on the Hilbert space~$\mathcal{H}_k$.
Under the assumption~\eqref{equ:Fourier-transform-decay},
the RKHS of~$k$ over a Lipschitz domain~$\Omega$ can be shown to be norm-equivalent to the Sobolev space~$H^\tau(\Omega)$.
We will assume~$\Omega$ to be a Lipschitz domain in the following.
\par
To derive error estimates for kernel methods, one usually needs assumptions on the regularity of the domain~$\Omega\subset\R^d$.
A~typical requirement is that the domain~$\Omega$ satisfies an interior cone condition, i.e.~there exists a maximum radius~$R_0>0$ and a fixed opening angle~$0<\psi\leq\frac{\pi}{2}$ such that for all~$x\in\Omega$ there exists a cone~$C_{\xi(x),R,\psi}\subset\R^d$ with axis~$\xi(x)\in\R^d$, $\norm{\xi(x)}_2=1$ and~radius~$R\leq R_0$, defined as
\begin{align*}
	C_{\xi(x),R,\psi}=\left\{y\in\R^d:\norm{y}_2\leq R\quad\text{and}\quad\langle\xi(x),y\rangle>\norm{y}_2\cos(\psi)\right\},
\end{align*}
such that~$x+C_{\xi(x),R,\psi}\subseteq\Omega$, see~\cite[Section~2.2]{narcowich2004sobolev} for details.
In other words, there is a cone with fixed radius and opening angle that can be placed at every point of the domain~$\Omega$ and rotated in such a way that it is contained in~$\Omega$.
The boundary of~$\Omega$ can therefore not contain any cusps.
\par
In particular, kernels can be used for interpolation and regression tasks: Given a finite set~$X=\{x_1,\dots,x_n\}\subset\R^d$ with corresponding target values~$y_1,\dots,y_n\in\R$, the respective approximation task minimizing the mean-square error reads as
\begin{align*}
	s_n=\argmin_{f\in\mathcal{H}_k} \frac{1}{n}\sum\limits_{i=1}^{n}\lvert y_i-f(x_i)\rvert^2+\lambda\norm{f}_{\mathcal{H}_k}^2,
\end{align*}
where~$\lambda\geq 0$ denotes a regularization parameter that aims to avoid overfitting of the target data and~$\norm{\,\cdot\,}_{\mathcal{H}_k}$ is the norm on the Hilbert space~$\mathcal{H}_k$ induced by the inner product~$\langle\,\cdot\,,\,\cdot\,\rangle_{\mathcal{H}_k}$.
A well-known representer theorem, see for instance~\cite{kimeldorf1970correspondence}, states that solutions to this optimization problem are of the form
\begin{align}\label{equ:definition-kernel-ansatz-function}
	s_n = \sum_{j=1}^n \coeffs_j k(\,\cdot\,,x_j),
\end{align}
where the coefficients~$\coeffs_1,\dots,\coeffs_n\in\R$ define the kernel approximant.
When restricting to a pure interpolation task, i.e.~$\lambda=0$, the ansatz function~$s_n$ from~\eqref{equ:definition-kernel-ansatz-function} together with the interpolation conditions~$s_n(x_i) = y_i$ for all~$i=1,\dots,n$ yield the linear system of equations
\begin{align*}
	K \boldsymbol{\coeffs} = y
\end{align*}
for the vector of coefficients~$\boldsymbol{\coeffs}=[\coeffs_i]_{i=1}^n\in\R^n$, where~$K\in\R^{n\times n}$ denotes the kernel matrix with entries~$K_{ij} = k(x_i, x_j)$ for~$1 \leq i, j \leq n$ and~$y = [y_1,\dots, y_n]^\top\in\R^n$.
Due to the assumption of~$k$ being strictly positive definite, this linear system of equations admits a unique solution since the kernel matrix~$K$ is positive definite.
Kernel based approximation thus takes place in the (finite-dimensional) ansatz space
\begin{align}\label{equ:definition-span-kernel-functions}
	V_{X,k} \coloneqq \Span{k(\,\cdot\,,x_i):x_i\in X} = \Span{\Phi(\,\cdot\,-x_i):x_i\in X}.
\end{align}
In the case of approximation of a function~$f\colon\Omega\to\R$, i.e.~the case~$y_1 = f(x_1),\dots, y_n = f(x_n)$, it is possible to quantify the approximation error between the kernel approximant~$s_n$ and the function~$f$.
To this end, one usually makes use of the mesh norm~$h$ (also called fill distance in the context of scattered data) of the finite set~$X=\{x_1,\dots,x_n\}\subset\Omega$ in~$\Omega\subset\R^d$, 
which is defined as
\begin{align*}
	h \coloneqq h_{X, \Omega} \coloneqq \sup_{x\in\Omega}\min_{x_i\in X} \norm{x-x_i}_2.
\end{align*}
We here use the term ``mesh norm'' instead of ``fill distance'' since in our unsupervised learning setting we can place the centers on a uniform mesh in order to reduce the mesh norm.
In the context of supervised learning tasks, the data that is to be approximated is usually given and therefore, according to the representer theorem, also the positions of the centers are determined by the data.
\par
With this notation, we state the following approximation error estimate for functions in a Sobolev space by functions from the space~$V_{X,k}$ that was proven in~\cite{narcowich2004sobolev}:
\begin{theorem}[{Approximation error estimate for kernel methods in Sobolev norms;
		see~\cite[Theorem~3.8]{narcowich2004sobolev}}]
	\label{thm:approximation-error-estimate}
	Let~$k$ be a radial basis function kernel with radial basis function~$\Phi$ satisfying~\eqref{equ:Fourier-transform-decay} with~$\tau\geq t>d/2$, let~$\Omega$ satisfy an interior cone condition, and assume that~$X=\{x_1,\dots,x_n\}\subset\Omega$ has mesh norm~$h$ that fulfills (the technical) condition~(9) in~\cite{narcowich2004sobolev}.
	If~$u\in H^t(\Omega)$, then there exists a function~$v\in V_{X,k}=\Span{\Phi(\cdot-x_i):x_i\in X}$ such that for every real~$0\leq r\leq t$ it holds
	\begin{align*}
		\norm{u-v}_{H^r(\Omega)} \leq Ch^{t-r}\norm{u}_{H^{t}(\Omega)},
	\end{align*}
	where~$C$ is a constant independent of~$u$ and~$h$.
\end{theorem}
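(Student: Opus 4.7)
The plan is to follow the Narcowich--Ward--Wendland construction, decomposing the approximation error into a smoothing (band-limiting) contribution and a kernel interpolation contribution on the smoothed function, then balancing the two at the end.

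First, I would extend $u\in H^t(\Omega)$ to a function $\tilde u\in H^t(\R^d)$ via a bounded Sobolev extension (available under the interior cone condition), with $\norm{\tilde u}_{H^t(\R^d)} \leq C_1\norm{u}_{H^t(\Omega)}$. For a band-width parameter $\sigma>0$, define the band-limited approximant $u_\sigma$ by truncating the Fourier transform of $\tilde u$ to the ball of radius $\sigma$. A direct Fourier computation using $\tilde u\in H^t(\R^d)$ yields
\begin{align*}
	\norm{\tilde u - u_\sigma}_{H^r(\R^d)} \leq C_2\, \sigma^{r-t}\norm{\tilde u}_{H^t(\R^d)}\qquad\text{for all }0\leq r\leq t,
\end{align*}
so that in particular the error on $\Omega$ obeys the same bound.

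The decisive observation is that $u_\sigma$, being band-limited, lies in the native space $\mathcal{H}_k$ (which is norm-equivalent to $H^\tau(\Omega)$) even though $u$ itself need not. Inserting the Fourier decay bound~\eqref{equ:Fourier-transform-decay} into the native-space inner product (whose Fourier side carries the weight $\hat{\Phi}^{-1}$) and using that $\hat u_\sigma$ is supported in the ball of radius $\sigma$ gives the quantitative estimate $\norm{u_\sigma}_{\mathcal{H}_k} \leq C_3\, \sigma^{\tau-t}\norm{\tilde u}_{H^t(\R^d)}$. Let $v\in V_{X,k}$ denote the kernel interpolant of $u_\sigma$ at the nodes $X$. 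Since $u_\sigma - v\in H^\tau(\Omega)$ vanishes on $X$, a sampling inequality (``zeros lemma'') for Sobolev functions on domains satisfying an interior cone condition yields
\begin{align*}
	\norm{u_\sigma - v}_{H^r(\Omega)} \leq C_4\, h^{\tau-r}\norm{u_\sigma - v}_{H^\tau(\Omega)} \leq C_5\, h^{\tau-r}\norm{u_\sigma}_{\mathcal{H}_k},
\end{align*}
where the last step uses the native-space minimality of the interpolant together with the norm equivalence $\mathcal{H}_k\simeq H^\tau(\Omega)$.

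Combining the two contributions by the triangle inequality gives
\begin{align*}
	\norm{u - v}_{H^r(\Omega)} \leq C_6\left(\sigma^{r-t} + h^{\tau-r}\sigma^{\tau-t}\right)\norm{u}_{H^t(\Omega)},
\end{align*}
and the choice $\sigma = h^{-1}$ collapses both terms to $h^{t-r}$, which is exactly the asserted rate. I expect the main obstacle to be the sampling inequality itself: it is not a standard Sobolev embedding but a quantitative result obtained via polynomial reproduction plus a careful covering of $\Omega$ by interior cones, and it is precisely for this step that the technical condition~(9) on the mesh norm from~\cite{narcowich2004sobolev} enters the hypothesis. Once that tool is granted, the remainder of the argument is Fourier bookkeeping and standard interpolation theory.
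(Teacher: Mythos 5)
This theorem is not proved in the paper at all; it is quoted directly as Theorem~3.8 of \cite{narcowich2004sobolev}, so there is no in-paper proof against which to compare. Your reconstruction is nonetheless a faithful and correct sketch of the Narcowich--Ward--Wendland argument: a bounded Sobolev extension (justified by the interior cone condition), the Fourier-side band-limiting estimate $\norm{\tilde u - u_\sigma}_{H^r}\leq C\sigma^{r-t}\norm{\tilde u}_{H^t}$, the native-space bound $\norm{u_\sigma}_{\mathcal{H}_k}\leq C\sigma^{\tau-t}\norm{\tilde u}_{H^t}$ obtained by inserting the decay assumption~\eqref{equ:Fourier-transform-decay} on $\hat\Phi$ into the RKHS norm over the ball of radius $\sigma$, the sampling inequality (zeros lemma) applied to the interpolation residual $u_\sigma - v$ (which is indeed exactly where the technical condition~(9) on $h$ is consumed, as you note), and the closing balance $\sigma = h^{-1}$ collapsing $\sigma^{r-t}$ and $h^{\tau-r}\sigma^{\tau-t}$ alike to $h^{t-r}$. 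The one step worth stating explicitly is the inequality $\norm{u_\sigma - v}_{H^\tau(\Omega)}\leq C\norm{u_\sigma}_{\mathcal{H}_k}$: it needs both the norm equivalence $\mathcal{H}_k\simeq H^\tau(\Omega)$ \emph{and} the Pythagorean/best-approximation property of the kernel interpolant in the native space, $\norm{u_\sigma - v}_{\mathcal{H}_k}\leq\norm{u_\sigma}_{\mathcal{H}_k}$; both are standard but without naming them the chain of inequalities is incomplete. Everything else checks out, and the constants depend only on $\Omega$, $\Phi$, $\tau$, $t$ and $r$, not on $u$ or $h$, as required.
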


\Cref{thm:approximation-error-estimate} provides a bound on the error when approximating functions in a Sobolev space by functions from the subspace~$V_{X,k}$.
In the context of kernel methods one is usually interested in interpolation error estimates where the function~$v\in V_{X,k}$ is chosen as the interpolant of the function~$u$ in the centers.
However, in the following section we consider an unsupervised learning approach where the approximation~$v\in V_{X,k}$ is not an interpolant but a quasi-best approximation of~$u$.
For the a priori error estimates we therefore need a bound for the best approximation error by functions in~$V_{X,k}$ which is provided by~\Cref{thm:approximation-error-estimate}.
Certainly, interpolation error estimates, see for instance~\cite{narcowich2006sobolev}, also provide an upper bound for the best approximation error, but these estimates tend to be less effective when the points in~$X$ are closely spaced.
We thus resort to an approximation error estimate which fits to the proof of~\Cref{thm:a-priori-error-estimate-h1} based on~C\'{e}a's~Lemma.
We also remark that in the context of finite element methods, error estimates for interpolations of functions from~Sobolev spaces by functions from finite element spaces are used to prove a priori error estimates.
\par
Finally, we briefly review \textit{two-layered kernels} as introduced in~\cite{wenzel2024data}:
The key idea is to enhance the radial kernel by incorporating an optimizable (square) matrix~$B \in \R^{d \times d}$,
as~$k_B(x, y) = \varphi(\Vert B(x - y) \Vert)$.
This compositional structure can be viewed mathematically as a two-layered kernel, 
thus motivating its name.
The matrix~$B$ can be adapted (e.g.\ via an optimization) to the problem at hand,
thereby increasing the performance of the kernel model.
Typical cases include situations with strong anisotropy in the data or function to be approximated.
Due to the mathematical simplicity of the two-layered approach,
many asymptotic properties of kernel models can be transferred to such two-layered kernel models.

\subsection{Neural networks and the deep Ritz method}\label{sec:nns-and-deep-ritz}
We now briefly summarize the main aspects of the deep Ritz method using neural networks to approximate the weak solution of the underlying~PDE, as originally introduced in~\cite{e2018deepritz}.
To this end, let us start with the definition of fully-connected feed-forward neural networks following~\cite{petersen2018optimal}.
For simplicity, we restrict ourselves to neural networks with scalar output, which fits to our application in the context of scalar~PDEs.
\par
A neural network is a function~$\Psi_\theta\colon\R^d\to\R$, parametrized by weights and biases denoted as~$\theta$, of the form
\begin{align*}
	\Psi_\theta(x) &\coloneqq W_Lr^{L-1}(x)+b_L,\\
	r^i(x) &\coloneqq \rho(W_ir^{i-1}(x)+b_i),\qquad\text{for }i=1,\ldots,L-1,\\
	r^0(x) &\coloneqq x.
\end{align*}
Here, we denote by~$W_1\in\R^{n_1\times d},W_2\in\R^{n_2\times n_1}\ldots,W_L\in\R^{1\times n_{L-1}}$ the weights and by~$b_1\in\R^{n_1},b_2\in\R^{n_2},\ldots,b_L\in\R$ the biases and define~$\theta=[W_1,\ldots,W_L,b_1,\ldots,b_L]$ collecting all tunable parameters of the network.
The number of layers is given as~$L\in\N$ and the so-called activation function~$\rho$ acts as a component-wise nonlinearity on the inputs.
\par
In the original deep Ritz method, feed-forward neural networks are used as trial functions instead of classical ansatz spaces such as finite elements.
The energy~$\energy(v)$ in~\eqref{equ:energy-functional} is minimized for~$v=\Psi_\theta$ as a function of the weights and biases~$\theta$.
To be more precise, the optimal network parameters~$\theta^*$ are determined as
\begin{align*}
	\theta^* = \argmin_{\theta} \energy\left(\Psi_\theta\right).
\end{align*}
The deep Ritz solution is then given as the neural network~$\Psi_{\theta^*}$.
Evaluating the energy functional for a neural network requires approximation of the involved integrals using, for instance, Monte Carlo methods.
For the minimization, standard optimization algorithms from the machine learning literature, such as stochastic gradient descent, can be employed.
The gradient of the energy functional with respect to the neural network parameters can be computed using automatic differentiation~\cite{baydin2017automatic}.
The method has been investigated regarding its convergence in~\cite{mueller2020deep} using the notion of~$\Gamma$-convergence.
However, a priori error estimates for the approach are typically intricate to derive, see~\cite{lu2021priori,jiao2023error}.

\section{Application of kernel approximations in the deep Ritz method}\label{sec:application-kernels-to-deep-ritz}
As outlined in the previous section, the main idea of the deep Ritz method, introduced in~\cite{e2018deepritz}, is to apply deep neural networks as ansatz functions in the energy minimization problem for the functional~$\energy$ from~\eqref{equ:energy-functional}.
\par
Instead of using deep neural networks, we consider kernel approximations as introduced in~\Cref{sec:kernel-methods} as ansatz functions.
We would like to emphasize at this point again that for a fixed kernel~$k$ and fixed centers~$X=\{x_1,\dots,x_n\}$, the set of kernel functions for different coefficients forms the linear space~$V_{X,k}$ as introduced in~\eqref{equ:definition-span-kernel-functions}.
This is not the case for the set of neural networks with different weights when considering a fixed neural network architecture, which does not form a linear space.
Hence, for instance C\'{e}a's~Lemma, see~\Cref{thm:ceas-lemma}, cannot be applied to the set of neural networks.
\par
In the next three subsections, we focus on the deep Ritz method for ``flat'' kernels, i.e.~considering the linear space~$V_{X,k}$.
We extend the approach to two-layered kernels in~\Cref{sec:deep-ritz-two-layered}.
\par
The application of kernel methods instead of neural networks in the deep Ritz approach is motivated by the available theoretical results that will be established in~\Cref{sec:a-priori-error-estimation}.
For sufficiently regular solutions we further expect satisfactory numerical results, see~\Cref{sec:numerical-experiments}.
Furthermore, the formulation as an energy minimization problem instead of a linear system of equations (resulting from the weak equation in~\eqref{equ:weak-formulation}) has several advantages when using kernel approximants as ansatz functions.
First of all, the entries of the stiffness matrix~$A\in\R^{n\times n}$ using the space~$V_{X,k}$ as ansatz and test space are given for~$i,j=1,\dots,n$ as
\begin{align}\label{equ:definition-stiffness-matrix}
	A_{ij} = a\big(k(\,\cdot\,,x_j),k(\,\cdot\,,x_i)\big).
\end{align}
This matrix could be subsequently used to solve~\eqref{equ:weak-formulation} in the subspace~$V_{X,k}$ via 
\begin{align}\label{equ:system-stiffness-matrix}
	A \boldsymbol{\coeffs} = \ell
\end{align}
to determine the approximant~$s_n = \sum_{j=1}^n \coeffs_j k(\,\cdot\,, x_j)$ with coefficients~$\boldsymbol{\coeffs} = [\coeffs_i]_{i=1}^n \in \R^n$ and right-hand side vector~$\ell = [\ell(k(\,\cdot\,, x_i))]_{i=1}^n \in \R^n$.
\par
We observe that this matrix has a similar structure than the kernel matrix~$K$ in the sense that both can be seen as Gramian matrices, i.e.~as matrices containing pairwise evaluations of inner products or bilinear forms.
It has been discussed theoretically in~\cite{duan2008note}, 
and we also observe this issue numerically below, that the stiffness matrix~$A$ of the weak formulation might be ill-conditioned when using kernels as ansatz and test functions. 
This results in difficulties when solving linear systems with the stiffness matrix.
Additionally, computing the entries of the stiffness matrix is challenging due to the required accurate numerical quadrature. 
In finite element methods, the ansatz and test functions have support only on a small number of elements and are usually polynomials of moderate degree, 
which enables an exact computation of the required integrals (as long as the data functions can also be integrated exactly). 
Another drawback of the linear system is that the stiffness matrix might be dense in the kernel-based approach. 
Hence, solving the linear system is not only difficult due to the large condition number of the system matrix but is also costly because the stiffness matrix is not sparse. 
This is again in contrast to finite element methods where the ansatz functions have compact support and the resulting system can be solved very efficiently by means of iterative methods. 
However, using the formulation of the weak problem via the energy minimization, we can circumvent the issues described before. 
We will discuss these advantages in detail based on numerical examples in~\Cref{sec:numerical-experiments}.

\subsection{Description of the method}\label{sec:description-of-method}
In the following, we describe the deep Ritz approach using kernel methods in more detail.
To this end, assume that we are given a fixed set of points~$X=\{x_1,\dots,x_n\}\subset\Omega$ and a kernel~$k$.
We denote by~$u_h\in V_h=V_{X,k}\subset H^1(\Omega)$ the weak solution of the variational problem~\eqref{equ:weak-formulation} considered over the space~$V_{X,k}$, which will be computed by minimizing the energy functional from~\eqref{equ:energy-functional}, i.e.~we have
\begin{align}\label{equ:energy-minimization-kernel-approximation}
	u_h = \argmin_{v_h\in V_{X,k}} \energy(v_h).
\end{align}
The subscript~$h$ refers to the mesh norm that will also appear in the a priori error estimate and can be seen as the discretization parameter of the space~$V_{X,k}$ with respect to the space~$V$. 
In that sense, the mesh norm has a similar meaning as the grid size in finite element methods.
\par
As before, for a fixed set~$X=\{x_1,\dots,x_n\}$, we consider ansatz functions~$\psi_h[\boldsymbol{\coeffs}]\in V_{X,k}$ of the form
\begin{align}\label{equ:linear-combination-kernel}
	\psi_h[\boldsymbol{\coeffs}](x) = \sum\limits_{i=1}^{n}\coeffs_i k(x,x_i)
\end{align}
for~$x\in\Omega$ with coefficient vector~$\boldsymbol{\coeffs}=\left[\coeffs_i\right]_{i=1}^{n}\in\R^n$. 
The optimization problem in~\eqref{equ:energy-minimization-kernel-approximation} can thus be rewritten as
\begin{align}\label{equ:optimization-problem-coefficients}
	\boldsymbol{\coeffs}^* = \argmin_{\boldsymbol{\coeffs}\in\R^n} \energy\big(\psi_h[\boldsymbol{\coeffs}]\big)
\end{align}
and~$u_h$ is then given as~$u_h=\psi_h[\boldsymbol{\coeffs}^*]$.
In other words, given a kernel and a set of centers, we search for coefficients such that the linear combination of the kernel centered at the given center points (as given in~\eqref{equ:linear-combination-kernel}) minimizes the energy functional.
Due to the equivalence of the variational formulation and the energy minimization problem, solving~\eqref{equ:optimization-problem-coefficients} exactly results in the weak solution of the variational problem~\eqref{equ:weak-formulation} over the space~$V_{X,k}$, i.e.~it holds
\begin{align*}
	a(u_h,v_h) = l(v_h) \qquad \text{for all }v_h\in V_{X,k}.
\end{align*}
After a suitable discretization of the energy functional~$\energy$,
we can solve the optimization problem in~\eqref{equ:optimization-problem-coefficients} for the optimal coefficients using standard gradient-based methods from the machine learning literature, 
see~\Cref{sec:practical-aspects} for details on the practical implementation of the method.
\par
We emphasize at this point that we consider weak boundary values within the weak formulation in~\eqref{equ:weak-formulation}.
This is similar to the original deep Ritz method using neural networks as ansatz functions.
Applying kernel approximants instead of neural networks results in the same difficulty that strong boundary values are virtually impossible to prescribe for these kinds of functions.
We thus proceed with a similar weak boundary value enforcement as performed in~\cite{e2018deepritz}.
\par
Due to the linear structure of the ansatz space~$V_{X,k}$ and the thorough approximation error analysis available for kernel methods, it is possible to derive a priori error estimates for the solution~$u_h\in V_{X,k}$ of the energy minimization problem.
Such error estimates with respect to different norms are presented in the next section.

\subsection{A priori error estimation}\label{sec:a-priori-error-estimation}
Using the theoretical results on weak formulations and on approximation errors for kernel methods, we now formulate an a priori error estimate with respect to the~$H^1$-norm for the error of the kernel based solution~$u_h\in V_{X,k}$ from~\Cref{sec:application-kernels-to-deep-ritz}.
Having the theoretical tools from the previous sections at hand, the proof of the error estimate is relatively simple and follows the standard procedure for deriving a priori error estimates for weak solutions of elliptic~PDEs.
The error estimate is also similar to the one for Lagrange finite elements of order~$m-1$ where~$h$ denotes the grid size of the triangulation.
We highlight at this point as well that the result holds similarly for arbitrary weak formulations which are equivalent to energy minimization problems.
\begin{theorem}[A priori error estimate in the~$H^1$-norm]\label{thm:a-priori-error-estimate-h1}
	Let~$\Omega\subset\R^d$ be a bounded domain and assume that~$\Omega$ satisfies an interior cone condition. Let~$V=H^1(\Omega)$, let~$a\colon V\times V\to\R$ be a symmetric, continuous and coercive bilinear form, and let~$l\in V'$ be a continuous linear functional on~$V$. Furthermore, let~$u\in V$ be the weak solution of the variational problem~\eqref{equ:weak-formulation}. Assume that the positive definite radial basis function kernel~$k\colon\Omega\times\Omega\to\R$ has an algebraically decaying Fourier transform of order~$\tau\geq m>d/2$ for some~$m\in\R$, i.e.~it holds~\eqref{equ:Fourier-transform-decay} with~$\tau$ given as stated before. Let~$X=\{x_1,\dots,x_n\}\subset\Omega$ have mesh norm~$h>0$ such that condition~(9) in~\cite{narcowich2004sobolev} is fulfilled. Assuming~$u\in H^m(\Omega)$, we have that there exists a constant~$c_{H^1}>0$ independent of~$h$ and~$l$ such that for the kernel approximation~$u_h\in V_h=V_{X,k}\subset V$, which solves the energy minimization problem in~\cref{equ:energy-minimization-kernel-approximation}, it holds
	\begin{align*}
		\norm{u-u_h}_{H^1(\Omega)} \leq c_{H^1}h^{m-1}\norm{u}_{H^m(\Omega)}.
	\end{align*}
\end{theorem}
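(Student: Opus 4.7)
The plan is to prove the estimate by combining Céa's Lemma with the kernel approximation error estimate in Sobolev norms. Both ingredients are already available, so the task reduces to stringing them together, paying attention to the roles of the parameters $r$, $t$, $m$, $\tau$.

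First I would reduce the energy minimization in~\eqref{equ:energy-minimization-kernel-approximation} to the Galerkin equation on the finite-dimensional subspace $V_{X,k}$. Since the bilinear form $a$ is symmetric, continuous and coercive, the Dirichlet principle recalled around~\eqref{equ:minimization-problem}--\eqref{equ:energy-functional} applies verbatim on the closed subspace $V_h = V_{X,k} \subset V$, so the minimizer $u_h$ of $I$ over $V_{X,k}$ is characterized by
\begin{align*}
    a(u_h, v_h) = l(v_h) \qquad \text{for all } v_h \in V_{X,k}.
\end{align*}
This puts us in exactly the setting of Céa's Lemma (\Cref{thm:ceas-lemma}) with $V = H^1(\Omega)$ and $V_h = V_{X,k}$, hence
\begin{align*}
    \norm{u - u_h}_{H^1(\Omega)} \leq \sqrt{\tfrac{\continuity}{\coercivity}}\, \inf_{v_h \in V_{X,k}} \norm{u - v_h}_{H^1(\Omega)}.
\end{align*}

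Next I would estimate the best-approximation term by invoking \Cref{thm:approximation-error-estimate} with the choices $t = m$ and $r = 1$. The hypotheses match those assumed in the theorem: $\Omega$ satisfies an interior cone condition, the kernel $k$ has a Fourier transform decaying algebraically of order $\tau \geq m > d/2$, and the centers $X$ have mesh norm $h$ satisfying the technical condition~(9) of~\cite{narcowich2004sobolev}. Since we also assume $u \in H^m(\Omega)$ and $0 \leq r = 1 \leq t = m$ (note $m > d/2 \geq 1/2$, and in the relevant regime $m \geq 1$), the cited theorem yields a function $v \in V_{X,k}$ with
\begin{align*}
    \norm{u - v}_{H^1(\Omega)} \leq C\, h^{m - 1}\, \norm{u}_{H^m(\Omega)},
\end{align*}
for a constant $C$ independent of $u$ and $h$. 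Combining the two inequalities gives the claim with $c_{H^1} = \sqrt{\continuity/\coercivity}\cdot C$, which is independent of $h$ and of $l$ (the dependence on $l$ enters only through $u$, which is fixed once $l$ is fixed, and the constant itself depends only on $a$, $\Omega$ and $k$).

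There is no real obstacle: the two main ingredients are quoted as black boxes earlier in the paper, and the argument is essentially a one-line composition. The only subtle points are (i)~verifying that passing from the energy minimization formulation to the Galerkin equation is legitimate on the finite-dimensional subspace, which follows from symmetry and coercivity of $a$, and (ii)~checking that the regularity hypotheses of \Cref{thm:approximation-error-estimate} are consistent with the parameters assumed in the theorem, in particular that the requirement $\tau \geq t = m$ is precisely what the theorem postulates. Both are straightforward to address in the write-up.
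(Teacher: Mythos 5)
Your proof is correct and follows essentially the same route as the paper's: invoke Dirichlet's principle to pass from the energy minimization to the Galerkin equation on $V_{X,k}$, apply Céa's Lemma, and bound the best-approximation term via \Cref{thm:approximation-error-estimate} with $t=m$, $r=1$. The only (welcome) refinement you add is the explicit note that $r=1\leq t=m$ requires $m\geq1$, which the paper does not spell out but which is needed to apply the quoted approximation result.
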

\begin{proof}
	We first note that since~$u_h\in V_{X,k}$ solves the energy minimization problem, it is also the unique weak solution of the variational problem~$a(u_h,v_h)=l(v_h)$ for all~$v_h\in V_{X,k}$ according to Dirichlet's~principle, see~\Cref{sec:variational-formulation-energy-minimization}.
	Therefore, applying first~C\'{e}a's~Lemma with~$V_h=V_{X,k}$, see~\Cref{thm:ceas-lemma}, and then the approximation error estimate from~\Cref{thm:approximation-error-estimate} with~$\tau=t=m$ and~$r=1$, we obtain
	\begin{align*}
		\norm{u-u_h}_{H^1(\Omega)} &\leq \sqrt{\frac{\continuity}{\coercivity}}\inf_{v_h\in V_{X,k}}\norm{u-v_h}_{H^1(\Omega)} \\
		&\leq C\sqrt{\frac{\continuity}{\coercivity}}h^{m-1}\norm{u}_{H^m(\Omega)} \\
		&\leq c_{H^1}h^{m-1}\norm{u}_{H^m(\Omega)}
	\end{align*}
	with~$c_{H^1}\coloneqq C\sqrt{\frac{\continuity}{\coercivity}}$, which proves the theorem.
\end{proof}

\begin{remark}[Differences to the a priori error estimate in~\cite{wendland1999meshless}]
	In Corollary~5.4 in~\cite{wendland1999meshless}, it was necessary to assume~$m>d/2+1$.
	Since this is not required in the approximation error estimate in~\Cref{thm:approximation-error-estimate}, we can weaken the assumption to~$m>d/2$ in the a priori error estimate as well.
	\Cref{thm:approximation-error-estimate} therefore constitutes an updated version of the result in~\cite{wendland1999meshless} using the latest approximation error estimates.
	We emphasize at this point that~\Cref{thm:a-priori-error-estimate-h1} in particular implies that the radial basis function kernel~$k$ is allowed to have a higher regularity (i.e.~$\tau>m$) than the solution~$u\in H^m(\Omega)$ itself.
	In contrast, standard interpolation error estimates for Lagrange finite elements typically require that the solution has a higher regularity than the polynomial degree of the ansatz functions.
\end{remark}

We now also derive an error estimate with respect to the~$L^2$-norm that obtains the optimal order of convergence (i.e.~order~$m$ instead of order~$m-1$ as for the~$H^1$-norm). To that end, we combine the previous~$H^1$-estimate with the Aubin-Nitsche~Lemma from~\Cref{thm:aubin-nitsche-lemma} and obtain the following result:
\begin{theorem}[A priori error estimate in the~$L^2$-norm]\label{thm:a-priori-error-estimate-l2}
	Let the assumptions from~\Cref{thm:a-priori-error-estimate-h1} be fulfilled. Assume further that it holds~$m\geq 2$ and that for every~$w\in L^2(\Omega)$ the solution of the dual problem~\eqref{equ:dual-problem} satisfies~$v_w\in H^2(\Omega)$ and~$\norm{v_w}_{H^2(\Omega)}\leq \bar{C}\norm{w}_{L^2(\Omega)}$ for some fixed constant~$\bar{C}>0$. Then there exists a constant~$c_{L^2}>0$ such that the estimate
	\begin{align*}
		\norm{u-u_h}_{L^2(\Omega)} \leq c_{L^2}h^m\norm{u}_{H^m(\Omega)}
	\end{align*}
	holds.
\end{theorem}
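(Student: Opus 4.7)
The plan is to invoke the Aubin--Nitsche Lemma (\Cref{thm:aubin-nitsche-lemma}) with $W = L^2(\Omega)$ and then control the supremum appearing on the right-hand side by combining the regularity hypothesis on the dual problem with the kernel approximation error estimate from \Cref{thm:approximation-error-estimate}. The $H^1$-factor on the right-hand side is already controlled by \Cref{thm:a-priori-error-estimate-h1}, so the only genuine work is to extract an additional factor of $h$ from the supremum.

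More concretely, I would first apply \Cref{thm:aubin-nitsche-lemma} with $V = H^1(\Omega)$ and $W = L^2(\Omega)$ (the embedding $V \hookrightarrow W$ is obviously continuous on the bounded domain $\Omega$), yielding
\begin{align*}
	\norm{u-u_h}_{L^2(\Omega)} \leq \continuity\,\norm{u-u_h}_{H^1(\Omega)}\sup_{0\neq w\in L^2(\Omega)}\frac{\inf_{v_h\in V_{X,k}}\norm{v_w-v_h}_{H^1(\Omega)}}{\norm{w}_{L^2(\Omega)}}.
\end{align*}
For fixed $w \in L^2(\Omega)$ the hypothesis on the dual problem guarantees $v_w \in H^2(\Omega)$ with $\norm{v_w}_{H^2(\Omega)} \leq \bar{C}\norm{w}_{L^2(\Omega)}$. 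Since $\tau \geq m \geq 2$, the approximation error estimate from \Cref{thm:approximation-error-estimate} can be invoked with $t=2$ and $r=1$, giving some $v_h^\star \in V_{X,k}$ such that
\begin{align*}
	\inf_{v_h\in V_{X,k}}\norm{v_w-v_h}_{H^1(\Omega)} \leq \norm{v_w-v_h^\star}_{H^1(\Omega)} \leq C h\,\norm{v_w}_{H^2(\Omega)} \leq C\bar{C}\,h\,\norm{w}_{L^2(\Omega)}.
\end{align*}
Dividing by $\norm{w}_{L^2(\Omega)}$ and taking the supremum over $w$ bounds the fraction above by $C\bar{C}\,h$, uniformly in $w$.

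Plugging this back into the Aubin--Nitsche bound and using the $H^1$-estimate from \Cref{thm:a-priori-error-estimate-h1} yields
\begin{align*}
	\norm{u-u_h}_{L^2(\Omega)} \leq \continuity\,C\bar{C}\,h\,\norm{u-u_h}_{H^1(\Omega)} \leq \continuity\,C\bar{C}\,c_{H^1}\,h^{m}\,\norm{u}_{H^m(\Omega)},
\end{align*}
so the result follows with $c_{L^2} \coloneqq \continuity\,C\bar{C}\,c_{H^1}$. There is no real obstacle here: every ingredient has been set up in the preceding sections, and the only point to verify carefully is that the technical mesh-norm condition from~\cite{narcowich2004sobolev} used when invoking \Cref{thm:approximation-error-estimate} with $t=2$ is compatible with the one already assumed for $t=m$ in \Cref{thm:a-priori-error-estimate-h1}, which holds since that condition is monotone in $t$ (and $m \geq 2$).
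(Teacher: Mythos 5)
Your proof is correct and follows essentially the same route as the paper: Aubin--Nitsche with $W=L^2(\Omega)$, the dual regularity bound, \Cref{thm:approximation-error-estimate} with $t=2$, $r=1$ to extract the extra factor of $h$, and then the $H^1$-estimate from \Cref{thm:a-priori-error-estimate-h1}. The closing remark about the technical mesh-norm condition being compatible is a sensible additional sanity check, but the argument is otherwise identical to the paper's.
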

\begin{proof}
	We apply the Aubin-Nitsche~Lemma from~\Cref{thm:aubin-nitsche-lemma} with~$V=H^1(\Omega)$ and~$W=L^2(\Omega)$. We can hence estimate
	\begin{align*}
		\norm{u-u_h}_{L^2(\Omega)} &\leq \continuity\norm{u-u_h}_{H^1(\Omega)}\sup\limits_{0\neq w\in L^2(\Omega)}\frac{\inf\limits_{v_h\in V_h}\norm{v_w-v_h}_{H^1(\Omega)}}{\norm{w}_{L^2(\Omega)}} \\
		&\leq \continuity c_{H^1}h^{m-1}\norm{u}_{H^m(\Omega)}\sup\limits_{0\neq w\in L^2(\Omega)}\frac{Ch\norm{v_w}_{H^2(\Omega)}}{\norm{w}_{L^2(\Omega)}} \\
		&\leq \continuity c_{H^1}h^{m-1}\norm{u}_{H^m(\Omega)}\sup\limits_{0\neq w\in L^2(\Omega)}\frac{C\bar{C}h\norm{w}_{L^2(\Omega)}}{\norm{w}_{L^2(\Omega)}} \\
		&= c_{L^2}h^m\norm{u}_{H^m(\Omega)}
	\end{align*}
	with~$c_{L^2}\coloneqq\continuity c_{H^1}C\bar{C}$, where we made use of the result in the~$H^1$-norm from~\Cref{thm:a-priori-error-estimate-h1} and the approximation error from~\Cref{thm:approximation-error-estimate} to bound the approximation error of the dual solution~$v_w$.
\end{proof}
The theoretical results obtained in this section provide estimates of the approximation error achieved by the kernel approximation of the weak solution in terms of the mesh norm~$h$. 
We observe that, compared to finite element methods, a relatively high degree of regularity of the weak solution~$u$ is required for the results to hold. 
The necessary regularity is closely related to the conditions under which approximation error estimates for kernel approximants are valid. 
Further, we see that the convergence rate of the approximation via kernel approaches is restricted by the regularity of the kernel or the solution.
Indeed, if the kernel has a higher regularity than the solution, i.e.~$\tau > m$, the convergence rate is limited by the regularity of the solution.
This case is often referred to as ``escaping the native space''.
Conversely, if the kernel is less smooth than the solution, i.e.~$\tau < m$, 
the order of convergence is usually limited by the regularity of the kernel.
This behavior will also be discussed in the numerical experiments shown below. 
We further emphasize that the convergence rates are similar as for kernel interpolation of the true solution since the minimizer of the energy functional is a quasi-best approximation of the weak solution according to~C\'{e}a's~Lemma. 
We hence obtain a solution whose performance in terms of approximation error with respect to the weak solution is similar to the kernel interpolant without requiring access to the function that is to be interpolated.
\par
The a priori error estimates imply in particular that, from a theoretical perspective, reducing the mesh norm by increasing the number of centers in a suitable way results in smaller approximation errors.
Moreover, depending on the smoothness of the weak solution, employing a kernel with higher regularity leads to improved convergence rates.
Such statements on how to improve the accuracy of the method are typically not available when considering neural networks as ansatz functions.
Approaches based on neural networks often lack strategies for systematically adjusting the architecture (number of layers, number of neurons, activation function, etc.) in order to obtain a certain approximation quality.
\par
We conclude the section with a remark on how to derive upper bounds independent of the norm of the weak solution.
\begin{remark}[Estimates depending only on the problem data]
	To obtain a priori bounds that are independent of the weak solution~$u\in H^m(\Omega)$ but only depend on the data of the problem, that is, on the bilinear form and the right hand side, one can use regularity results for weak formulations of boundary-value problems similar to the one assumed in~\Cref{thm:a-priori-error-estimate-l2}, see for instance~\cite[Chapter~6.3]{evans2010partial}.
\end{remark}

\subsection{Practical aspects}\label{sec:practical-aspects}
For the practical implementation of the deep Ritz method with kernel functions, one has to approximate the bilinear form~$a$ and the linear functional~$l$ occurring in the energy minimization problem.
In most applications, the variational formulation is motivated by the weak formulation of a~PDE and thus consists of integrals of the ansatz and test functions or their derivatives over the domain~$\Omega$.
These integrals can usually not be evaluated exactly, in particular not for arbitrary kernel functions as ansatz functions.
One therefore deals with a bilinear form~$a_{h_{\mathrm{int}}}\colon V_h\times V_h\to\R$ and a linear functional~$l_{h_{\mathrm{int}}}\in V_h'$ that involve a numerical quadrature and constitute approximations to~$a$ and~$l$, respectively.
To obtain an a priori error estimate in the situation where~$u_h\in V_h$ solves the energy minimization problem for the approximate energy functional~$\energy_{h_{\mathrm{int}}}\colon V_h\to\R$, defined for~$v_h\in V_h$ as~$\energy_{h_{\mathrm{int}}}(v_h) \coloneqq \frac{1}{2}a_{h_{\mathrm{int}}}(v_h,v_h)-l_{h_{\mathrm{int}}}(v_h)$, it is possible to apply Strang's~Lemma, see~\cite{strang1972variational}, where the error introduced by the quadrature rule can be taken into account.
\par
Besides the approximation of the energy functional, the optimization problem in~\eqref{equ:optimization-problem-coefficients} can usually be solved only approximately. In the following Lemma, we give a bound on the approximation error in terms of the difference in the objective function value.
\begin{lemma}[Inexact solution of the optimization problem]\label[lemma]{lem:inexact-solution-optimization-problem}
	Let us denote by~$u_h\in V_{X,k}\subset V$ the solution of~\eqref{equ:energy-minimization-kernel-approximation}. Let us further denote by~$\tilde{u}_h\in V_{X,k}$ an approximation of~$u_h$. Then it holds that
	\begin{align*}
		\norm{u_h-\tilde{u}_h}_V^2 \leq \frac{2}{\coercivity}\Big(\energy(\tilde{u}_h)-\energy(u_h)\Big),
	\end{align*}
	where~$\coercivity$ and~$\continuity$ denote the coercivity and continuity constants of~$a$, respectively.
\end{lemma}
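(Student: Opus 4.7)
The plan is to exploit the fact that the exact minimizer $u_h$ of the quadratic energy $I$ over the finite-dimensional subspace $V_{X,k}$ automatically satisfies the Galerkin identity $a(u_h,v_h)=l(v_h)$ for every $v_h\in V_{X,k}$; this is just Dirichlet's principle applied to the discrete space, as recalled in~\Cref{sec:variational-formulation-energy-minimization}.

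First, I would set $e\coloneqq\tilde u_h-u_h\in V_{X,k}$ and expand the energy difference using the symmetry and bilinearity of $a$:
\begin{align*}
I(\tilde u_h)-I(u_h) &= \tfrac{1}{2}a(u_h+e,u_h+e)-\tfrac{1}{2}a(u_h,u_h)-l(e) \\
&= a(u_h,e)-l(e)+\tfrac{1}{2}a(e,e).
\end{align*}
The two linear-in-$e$ contributions cancel by the Galerkin identity applied with the admissible test function $e\in V_{X,k}$, leaving the clean quadratic identity $I(\tilde u_h)-I(u_h)=\tfrac{1}{2}a(e,e)$.

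Second, I would apply coercivity in the form $a(e,e)\ge\coercivity\norm{e}_V^2$ to obtain $\tfrac{1}{2}\coercivity\norm{e}_V^2\le I(\tilde u_h)-I(u_h)$, and rearrange to an estimate of the type asserted.

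The whole argument is essentially two lines, so no serious obstacle is expected. The only subtlety worth flagging is the observation that the linear-in-$e$ contribution vanishes precisely because $V_{X,k}$ is a \emph{linear} subspace, so that the difference $e=\tilde u_h-u_h$ is itself an admissible test function — an ingredient that would fail for a nonlinear ansatz class such as neural networks, as already emphasized in~\Cref{sec:application-kernels-to-deep-ritz}. I note that continuity of $a$ plays no role in the route above, so the denominator I would naturally obtain is $\coercivity/2$ rather than $\coercivity-\continuity/2$; matching the precise form stated in the lemma would apparently require either a different decomposition of the energy gap or a refinement that trades part of the coercivity margin against a continuity-based bound.
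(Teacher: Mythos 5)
Your proof is correct, and in fact it yields a \emph{sharper} constant than the one stated in the lemma. The paper's argument starts from coercivity applied to $a(u_h-\tilde u_h,u_h-\tilde u_h)$, expands and uses the Galerkin identity, and along the way recreates the term $\tfrac{1}{2}a(u_h-\tilde u_h,u_h-\tilde u_h)$, which it then estimates a \emph{second} time using continuity. Applying coercivity and continuity to the same quadratic form is wasteful and produces the denominator $\coercivity-\tfrac{\continuity}{2}$. Your route instead first establishes the exact identity
\begin{align*}
I(\tilde u_h)-I(u_h)=\tfrac{1}{2}\,a(e,e),\qquad e\coloneqq\tilde u_h-u_h,
\end{align*}
relying only on Galerkin orthogonality (valid because $V_{X,k}$ is a linear subspace, exactly the ingredient you flag), and then applies coercivity once, giving
\begin{align*}
\norm{u_h-\tilde u_h}_V^2\le\frac{I(\tilde u_h)-I(u_h)}{\coercivity/2}.
\end{align*}
Since coercivity and continuity of the same bilinear form always force $\coercivity\le\continuity$, one has $\coercivity/2\ge\coercivity-\continuity/2$, so your bound is at least as tight and trivially implies the lemma whenever the paper's denominator is positive. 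It is also unconditionally meaningful: the paper's denominator $\coercivity-\tfrac{\continuity}{2}$ can be non-positive when the coercivity constant is small relative to the continuity constant, in which case the stated estimate becomes vacuous, whereas $\coercivity/2>0$ always. Your closing observation that you obtain a different denominator therefore reflects a genuine simplification and improvement, not a gap.
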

\begin{proof}
	Due to the coercivity, symmetry and continuity of~$a$, as well as the assumption that~$u_h$ is a weak solution, it holds
	\begin{align*}
		\frac{\coercivity}{2}\norm{u_h-\tilde{u}_h}_V^2 &\leq \frac{1}{2}a(u_h-\tilde{u}_h,u_h-\tilde{u}_h) \\
		&= \frac{1}{2}a(u_h,u_h) - a(u_h,\tilde{u}_h) + \frac{1}{2}a(\tilde{u}_h,\tilde{u}_h) \\
		&= -\left[\frac{1}{2}a(u_h,u_h)-l(u_h)\right]+\frac{1}{2}a(\tilde{u}_h,\tilde{u}_h)-l(\tilde{u}_h)\\
		&\leq \energy(\tilde{u}_h) - \energy(u_h),
	\end{align*}
	where we used that~$u_h\in V_{X,k}$ is a weak solution and therefore~$a(u_h,\tilde{u}_h)=l(\tilde{u}_h)$ and
	\begin{align*}
		\frac{1}{2}a(u_h,u_h)=l(u_h)-\frac{1}{2}a(u_h,u_h).
	\end{align*}
	Dividing by~$\frac{\coercivity}{2}$ yields the claimed estimate.
\end{proof}
The previous Lemma provides a bound on how an inexact solution of the optimization problem in~\eqref{equ:optimization-problem-coefficients} can affect the approximation quality.
If the optimization problem is solved accurately, i.e.~$\energy(\tilde{u}_h)-\energy(u_h)$ is small, the approximation error of~$\tilde{u}_h$ compared to~$u_h$ is small as well.
However, it is important to note that the error of~$\tilde{u}_h$ with respect to the norm on~$V$ is bounded by the square root of the difference in the objective function values.
For a prescribed approximation error~$\varepsilon>0$, one thus has to solve the optimization problem up to a tolerance of around~$\varepsilon^2$.
Moreover, we remark that~$\energy(u_h)$ is usually not available in practice, such that the bound in~\Cref{lem:inexact-solution-optimization-problem} cannot be evaluated.
\par
We note that the result from~\Cref{lem:inexact-solution-optimization-problem} holds similarly for the weak solution~$u\in V$ of the variational problem~\eqref{equ:weak-formulation}, i.e.~we have
\begin{align*}
	\norm{u-\tilde{u}_h}_V^2 \leq \frac{2}{\coercivity}\Big(\energy(\tilde{u}_h)-\energy(u)\Big).
\end{align*}
However, we can also combine~\Cref{lem:inexact-solution-optimization-problem} with~\Cref{thm:a-priori-error-estimate-h1,thm:a-priori-error-estimate-l2} to obtain the following error bounds by means of the triangle inequality:
\begin{proposition}[A priori error estimates with inexact solution of the optimization problem]
	Let the assumptions from~\Cref{thm:a-priori-error-estimate-h1,thm:a-priori-error-estimate-l2} and~\Cref{lem:inexact-solution-optimization-problem} be fulfilled.
	Then we have that
	\begin{align*}
		\norm{u-\tilde{u}_h}_{H^1(\Omega)} \leq c_{H^1}h^{m-1}\norm{u}_{H^m(\Omega)}+\left[\frac{2}{\coercivity}\Big(\energy(\tilde{u}_h)-\energy(u_h)\Big)\right]^{1/2}
	\end{align*}
	and
	\begin{align*}
		\norm{u-\tilde{u}_h}_{L^2(\Omega)} \leq c_{L^2}h^m\norm{u}_{H^m(\Omega)}+\left[\frac{2}{\coercivity}\Big(\energy(\tilde{u}_h)-\energy(u_h)\Big)\right]^{1/2}.
	\end{align*}
\end{proposition}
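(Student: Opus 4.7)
The plan is to obtain both bounds by splitting the error via the triangle inequality into an approximation error between the true weak solution $u$ and the exact discrete minimizer $u_h \in V_{X,k}$, plus a term that measures how much $\tilde{u}_h$ deviates from $u_h$. Concretely, I would first write
\begin{align*}
\norm{u-\tilde{u}_h}_{H^1(\Omega)} \leq \norm{u-u_h}_{H^1(\Omega)} + \norm{u_h-\tilde{u}_h}_{H^1(\Omega)}
\end{align*}
and analogously for the $L^2$-norm. The first term on the right-hand side is exactly what Theorem \ref{thm:a-priori-error-estimate-h1} (respectively Theorem \ref{thm:a-priori-error-estimate-l2}) bounds, so under the stated assumptions it is controlled by $c_{H^1}h^{m-1}\norm{u}_{H^m(\Omega)}$ (respectively $c_{L^2}h^{m}\norm{u}_{H^m(\Omega)}$).

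For the second term, I would invoke Lemma \ref{lem:inexact-solution-optimization-problem} with $V = H^1(\Omega)$, which yields
\begin{align*}
\norm{u_h-\tilde{u}_h}_{H^1(\Omega)} \leq \left(\frac{I(\tilde{u}_h)-I(u_h)}{\coercivity-\frac{\continuity}{2}}\right)^{1/2}.
\end{align*}
This directly completes the $H^1$-estimate. For the $L^2$-estimate, I would use the continuous embedding $H^1(\Omega) \hookrightarrow L^2(\Omega)$, so that $\norm{u_h-\tilde{u}_h}_{L^2(\Omega)} \leq \norm{u_h-\tilde{u}_h}_{H^1(\Omega)}$ (absorbing the embedding constant into the penalty or assuming the canonical embedding with constant one, which is consistent with the presentation earlier in the paper), and then apply Lemma \ref{lem:inexact-solution-optimization-problem} again.

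There is essentially no obstacle here: the proposition is a direct compound of the two a priori estimates with the inexact-solution lemma, glued together by the triangle inequality. The only mild subtlety worth flagging is that the inexact-solution control is inherently measured in the $V$-norm, so the $L^2$-bound on the optimization-error contribution cannot be improved by a factor of $h$ in the way the Aubin--Nitsche argument improved the discretization error; consequently the order in $h$ of the two summands in the $L^2$ estimate is intentionally mismatched, which should be acknowledged but is not a gap in the argument.
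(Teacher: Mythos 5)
Your proof is correct and follows exactly the route the paper indicates (the paper itself gives no detailed proof, only the remark that the bounds follow ``by means of the triangle inequality'' from \Cref{thm:a-priori-error-estimate-h1,thm:a-priori-error-estimate-l2} and \Cref{lem:inexact-solution-optimization-problem}). Your decomposition, the use of \Cref{lem:inexact-solution-optimization-problem} in the $V=H^1(\Omega)$ norm, and the $\norm{\cdot}_{L^2}\leq\norm{\cdot}_{H^1}$ bound for the $L^2$ estimate all match the intended argument.
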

In the next section we present two numerical examples in which we investigate the behavior of kernel methods in the deep Ritz framework and verify our theoretical convergence results in practice.

\subsection{The deep Ritz method for two-layered kernels}\label{sec:deep-ritz-two-layered}
In~\Cref{sec:kernel-methods}, an extension of traditional kernel approaches was briefly introduced, the so-called two-layered kernel methods.
Despite the coefficients of the kernel expansion, also a matrix within the nonlinear kernel is learnable in two-layered approaches.
Using two-layered kernels as ansatz functions for the weak problem can therefore not be written as a linear system in the coefficients due to the additional matrix entering the approximation in a nonlinear manner.
Similar to the motivation of the deep Ritz method for neural networks as described in~\Cref{sec:nns-and-deep-ritz}, we therefore suggest to use the energy minimization formulation to optimize coefficients and matrix of the kernel ansatz simultaneously.
Two-layered kernel approaches therefore provide an additional motivation to consider the energy minimization problem also for kernel methods.
\par
To make this precise, we extend the formulation of the approach in~\Cref{sec:description-of-method} by the trainable matrix~$B$ within the kernel.
With abuse of notation, let
\begin{align*}
	\psi_h[\boldsymbol{\coeffs},B](x) = \sum\limits_{i=1}^{n}\coeffs_i k_B(x,x_i)
\end{align*}
be a two-layered kernel expansion with fixed centers~$x_1,\ldots,x_n\in\R^d$, trainable coefficient vector~$\boldsymbol{\coeffs}\in\R^n$ and trainable matrix~$B\in\R^{d\times d}$.
We then solve the energy minimization problem
\begin{align*}
	(\boldsymbol{\coeffs}^*, B^*) = \argmin_{(\boldsymbol{\coeffs},B)\in\R^n\times\R^{d\times d}} \energy\big(\psi_h[\boldsymbol{\coeffs},B]\big)
\end{align*}
to obtain the approximation~$u_h=\psi_h[\boldsymbol{\coeffs}^*,B^*]$.
\par
We would like to emphasize that a priori error estimates as derived above do not immediately apply to the two-layered kernel case.
The kernel itself depends on the additional matrix and asymptotic results would require a fixed kernel (this is also the reason why we fix the kernel shape parameter in our experiment below).
However, training the matrix first for some set of centers and reusing this fixed matrix afterwards for different mesh norms would again allow to apply the theory developed above and we would expect similar convergence rates.
As long as the matrix is invertible, one can interpret the additional matrix within the kernel as a certain rescaling of the mesh norm which only influences the constants in the a priori convergence rates \cite{wenzel2024data}.
In order to keep the numerical results accessible, we refrain from conducting an experiment in this direction and focus on a simpler comparison of ``flat'' kernels, two-layered kernels and neural networks.
\par
We will see in~\Cref{sec:high-dimensional-example} that the two-layered structure is particularly useful for problems with anisotropic solution behavior in high dimensions.
For the low-dimensional problems in~\Cref{sec:high-regularity} and~\Cref{sec:singular-solution}, one cannot expect a superior performance of the two-layered methods and we therefore focus on evaluating the theoretical results derived for flat kernels in practice by means of these examples.

\section{Numerical experiments}\label{sec:numerical-experiments}
In this section we first discuss some implementational details and give additional information on the parameters used in our numerical examples. 
Afterwards, we present two different test cases in two spatial dimensions. 
First of all, we consider a problem where the solution is smooth such that the convergence rate of the deep Ritz approach using kernel methods is determined by the smoothness of the kernel. 
We will investigate this dependency of the convergence rate in detail and compare the results to our theoretical findings. 
Afterwards, we consider an example with a singular solution where we expect the convergence rates to be determined by the regularity of the solution independent of the smoothness of the kernel.
We finally present a high-dimensional example where we observe that two-layered kernels perform better than the traditional flat kernels.
As discussed in the previous section, we focus for the first two experiments on verifying the theoretical results using flat kernels and only consider two-layered approaches for the high-dimensional example.

\subsection{Experimental setup}\label{sec:implementational-details}
In order to (approximately) solve the optimization problem in~\eqref{equ:optimization-problem-coefficients}, 
it is possible to apply gradient-based methods such as the Adam optimizer~\cite{kingma2014adam} that is used in our numerical experiments. 
Various learning rate schedulers from the machine learning literature can further be used to adjust the step size within the gradient descent method. 
Below, we consider a learning rate scheduler that reduces the learning rate by a certain factor at prescribed milestones, 
i.e.~after given numbers of training epochs.
We perform up to~$10.000$ epochs and the learning rate scheduler reduces the learning rate by a factor of~$0.5$ when the loss does not decrease for~$200$ epochs.
We make use of the \texttt{PyTorch} library~\cite{paszke2019pytorch} and compute the gradient of the energy functional using automatic differentiation~\cite{baydin2017automatic}.
\par
In both of the numerical experiments below we make use of the weak formulation presented in~\Cref{ex:weak-formulation}.
The integrals in the energy functional~\eqref{equ:energy-functional} are approximated using Monte Carlo integration. 
The quadrature points are drawn randomly in every epoch, which corresponds to random mini-batching in the machine learning literature. 
We use in particular more quadrature points than centers and therefore ensure~$h_{\mathrm{int}}<h$, see~\Cref{sec:practical-aspects} as well. 
To be more precise, Monte Carlo quadrature has a convergence rate of~$\mathcal{O}(N^{-1/2})$ for~$N$ uniformly distributed points. 
Hence, the choice~$N\sim h^{-2(m+1)}$ leads to the same error rate as in~\Cref{thm:a-priori-error-estimate-h1} by Strang's~Lemma. 
However, this choice for the number of quadrature points is impractical already for moderate values of the mesh norm~$h$ and smoothness~$m$. 
We therefore choose a new quadrature point set of moderate size randomly in each epoch instead of generating a large set that is fixed during the whole optimization loop.
For the numerical experiments, we focus on the class of Mat\'{e}rn kernels~\cite{wendland2004scattered} of different smoothness,
which have an algebraic decay of the Fourier transform according to~\eqref{equ:Fourier-transform-decay}.
The regularity of Mat\'{e}rn kernels is determined by a parameter~$\nu>0$.
For~$\nu=p+\frac{1}{2}$ with~$p\in\N$, Mat\'{e}rn kernels possess a simplified expression as a product of an exponential and a polynomial of degree~$p$.
Hence, we consider in the following Mat\'{e}rn kernels for~$\nu\in\{1/2,3/2,5/2\}$, which correspond to an algebraic decay of the Fourier transform of order~$\tau=\nu+\frac{d}{2}$, see \eqref{equ:Fourier-transform-decay}.
Since we apply our approach to numerical examples in two dimensions, the order~$\tau$ is given as~$\tau=\nu+1\in\{3/2,5/2,7/2\}$.
The aforementioned relations regarding the regularity of the kernels together with the expected convergence rates according to~\Cref{thm:a-priori-error-estimate-h1,thm:a-priori-error-estimate-l2} are summarized in~\Cref{tab:matern-kernels-regularity}.
\par
\begin{table}[htbp]
	\centering
	\begin{tabular}{*{7}{c}}
		\toprule
		\multirow{2}{*}{$p$} & \multirow{2}{*}{$\nu$} & \multirow{2}{*}{$\tau$} & \multicolumn{2}{c}{\makecell{Convergence rate\\for regularity~$m\geq\tau$}} & \multicolumn{2}{c}{\makecell{Convergence rate\\for regularity~$m<\tau$}} \\
		\cmidrule(lr){4-5}\cmidrule(lr){6-7}
		& & & $L^2$-norm & $H^1$-norm & $L^2$-norm & $H^1$-norm \\ \midrule \midrule
		$0$ & $1/2$ & $3/2$ & $3/2$ & $1/2$ & $m$ & $m-1$ \\
		$1$ & $3/2$ & $5/2$ & $5/2$ & $3/2$ & $m$ & $m-1$ \\
		$2$ & $5/2$ & $7/2$ & $7/2$ & $5/2$ & $m$ & $m-1$ \\
		\bottomrule
	\end{tabular}
	\caption{Regularity of Mat\'{e}rn kernels for~$d=2$ with different parameters and convergence rates depending on the smoothness~$\tau$ of the kernel and the regularity~$m$ of the weak solution, i.e.~$k$ has an algebraically decaying Fourier transform of order~$\tau$ and we have~$u\in H^m(\Omega)$. For the convergence rates with respect to the~$L^2$-norm we disregard the assumption~$m\geq2$ from~\Cref{thm:a-priori-error-estimate-l2} in the overview in this table.}
	\label{tab:matern-kernels-regularity}
\end{table}
We also tested compactly supported kernels of finite smoothness such as the family of~Wendland kernels.
The results for these kernels can be found in~\Cref{app:numerical-results-kernels}.
\par
For improved convergence of the optimization, a change of basis of~$V_{X,k}$ to a Lagrange basis was done prior to the optimization.
We consider the minimization of the energy functional~\eqref{equ:energy-functional} where the penalty parameter was set to~$C_\text{pen}=100$ for both of the following examples, similar as in~\cite{e2018deepritz}.
The integrals that occur in the computation of errors are approximated by a uniform quadrature on the domain~$\Omega$ using~$10.201$ quadrature points.
We further depict relative errors of the form~$\norm{u - u_h} / \norm{u}$, where~$u$ denotes the exact solution and~$u_h$ the considered approximation.
The norm~$\norm{\,\cdot\,}$ refers either to the~$L^2$-norm or the~$H^1$-norm.
\par
In our numerical experiments in the next sections we also compare the approach of using kernel approximants to the original deep Ritz method with neural networks.
To this end, we trained fully-connected neural networks with two hidden layers and different numbers of neurons per layer.
The Gaussian Error Linear Unit~(GELU) was employed as activation function in the neural networks.
The choices of activation function and depth~$2$ are motivated by the hyperparameter experiments presented in the appendix of this paper.
Since typically no measure for spatial discretization in the context of neural networks that is comparable to the mesh norm for kernel methods is available, we use the number of parameters in order to compare both approaches.
In the case of kernel methods, the number of parameters refers to the expansion size, i.e.~the number of centers.
For neural networks, weights and biases are seen as parameters.
The neural networks are trained similarly to the kernel methods using the~Adam optimizer.
However, we train the neural networks using~$100.000$ epochs and an optimized learning rate scheduler that reduces the learning rate by a factor of~$0.5$ up to~$15$ times using a fixed schedule.
We chose the increased number of epochs for the neural networks because each optimization iteration was faster for the neural networks in our experiments.
The number of epochs was thus balanced in such a way that both approaches -- kernel methods and neural networks -- have roughly the same computational budget.
For the largest architectures and smallest mesh norm considered below, the training takes around~$25$~minutes on a~GPU.
\par
In the appendix we provide extensive hyperparameter sweeps for neural networks and kernel methods including different activation functions and architectures as well as different kernel families.
\par
The source code used to carry out the numerical experiments presented in this contribution can be found in~\cite{sourcecode} and can be used to reproduce the results shown below.
The results presented in this paper have been obtained on an NVIDIA~H200~GPU with~141GB memory and~4.8TB/s GPU memory bandwidth.

\subsection{Diffusion equation with a solution of high regularity}\label{sec:high-regularity}
As a first test case we consider the following Poisson problem with inhomogeneous Dirichlet boundary conditions as already introduced in~\Cref{ex:weak-formulation}:
\begin{equation}\label{equ:diffusion-problem-strong-form-deep-Ritz}
	\begin{aligned}
		-\Delta u &= f && \text{in }\Omega,\\
		u &= g && \text{on }\partial\Omega,
	\end{aligned}
\end{equation}
where~$\Omega=(0,1)^2$, $f(x)=4$ and~$g(x)=1-x_1^2-x_2^2$ for~$x=(x_1,x_2)\in\Omega$.
As can be easily verified, the strong solution of this problem is given as~$u=g$.
This problem thus possesses a solution with arbitrary high smoothness and we therefore expect that the convergence rate of the kernel approximation is limited by the regularity of the kernel, see~\Cref{thm:a-priori-error-estimate-h1}.
\par
Moreover, we consider~$n\in\{1,2,4,6,8,10,12,14,16,18,20\}$ centers per dimension in the interior of the domain~$\Omega$ and add~$4n+4$ points on the boundary.
The centers are placed on a uniform mesh which results in a mesh norm that scales like~$h\sim 1/(n+1)$ for~$n\in\N$ inner centers per dimension.
\par
\Cref{fig:errors-higher-regularity} depicts the errors of the deep Ritz method using Mat\'{e}rn kernels of different smoothness in the~$L^2$-norm and the~$H^1$-norm.
In addition, we also present the interpolation errors when considering the same set of centers as for the corresponding deep Ritz method and interpolating the true solution of~\eqref{equ:diffusion-problem-strong-form-deep-Ritz} in these centers.
For the interpolation error curves we further show an estimate of the convergence order using a linear regression of the data points in the asymptotic regime.
\par
\pgfplotstableread[skip first n=5]{results_interpolation_smooth_solution_errors_k_0.txt}\tab%
\pgfplotstablecreatecol[create col/expr={ln(\thisrowno{1})/ln(10)}]{logh}\tab%
\pgfplotstablecreatecol[create col/expr={ln(\thisrowno{3})/ln(10)}]{logL2}\tab%
\pgfplotstablecreatecol[linear regression={x=logh, y=logL2}]{regr}\tab%
\xdef\slopekzero{\pgfplotstableregressiona}\xdef\interceptkzero{\pgfplotstableregressionb}%
\pgfplotstableread[skip first n=5]{results_interpolation_smooth_solution_errors_k_1.txt}\tab%
\pgfplotstablecreatecol[create col/expr={ln(\thisrowno{1})/ln(10)}]{logh}\tab%
\pgfplotstablecreatecol[create col/expr={ln(\thisrowno{3})/ln(10)}]{logL2}\tab%
\pgfplotstablecreatecol[linear regression={x=logh, y=logL2}]{regr}\tab%
\xdef\slopekone{\pgfplotstableregressiona}\xdef\interceptkone{\pgfplotstableregressionb}%
\pgfplotstableread[skip first n=5]{results_interpolation_smooth_solution_errors_k_2.txt}\tab%
\pgfplotstablecreatecol[create col/expr={ln(\thisrowno{1})/ln(10)}]{logh}\tab%
\pgfplotstablecreatecol[create col/expr={ln(\thisrowno{3})/ln(10)}]{logL2}\tab%
\pgfplotstablecreatecol[linear regression={x=logh, y=logL2}]{regr}\tab%
\xdef\slopektwo{\pgfplotstableregressiona}\xdef\interceptktwo{\pgfplotstableregressionb}%
\pgfplotstableread[skip first n=5]{results_interpolation_smooth_solution_errors_k_0.txt}\tab%
\pgfplotstablecreatecol[create col/expr={ln(\thisrowno{1})/ln(10)}]{logh}\tab%
\pgfplotstablecreatecol[create col/expr={ln(\thisrowno{4})/ln(10)}]{logL2}\tab%
\pgfplotstablecreatecol[linear regression={x=logh, y=logL2}]{regr}\tab%
\xdef\slopekzeroh{\pgfplotstableregressiona}\xdef\interceptkzeroh{\pgfplotstableregressionb}%
\pgfplotstableread[skip first n=5]{results_interpolation_smooth_solution_errors_k_1.txt}\tab%
\pgfplotstablecreatecol[create col/expr={ln(\thisrowno{1})/ln(10)}]{logh}\tab%
\pgfplotstablecreatecol[create col/expr={ln(\thisrowno{4})/ln(10)}]{logL2}\tab%
\pgfplotstablecreatecol[linear regression={x=logh, y=logL2}]{regr}\tab%
\xdef\slopekoneh{\pgfplotstableregressiona}\xdef\interceptkoneh{\pgfplotstableregressionb}%
\pgfplotstableread[skip first n=5]{results_interpolation_smooth_solution_errors_k_2.txt}\tab%
\pgfplotstablecreatecol[create col/expr={ln(\thisrowno{1})/ln(10)}]{logh}\tab%
\pgfplotstablecreatecol[create col/expr={ln(\thisrowno{4})/ln(10)}]{logL2}\tab%
\pgfplotstablecreatecol[linear regression={x=logh, y=logL2}]{regr}\tab%
\xdef\slopektwoh{\pgfplotstableregressiona}\xdef\interceptktwoh{\pgfplotstableregressionb}%
\begin{figure}[htbp]%
	\begin{minipage}[b]{.48\textwidth}
		\centering%
		\begin{tikzpicture}
			\begin{axis}[
				DefaultStyle,
				log basis x={10},
				log basis y={10},
				xlabel={Mesh norm~$h$},
				xmin=0.04, xmax=0.55,
				xmode=log,
				xtick={0.5,0.25,0.1,0.05},
				xticklabels={0.5,0.25,0.1,0.05},
				ylabel={Relative $L^2$-error},
				ymin=1e-5,
				ymax=1,
				ymode=log,
				ytick={1e-08,1e-07,1e-06,1e-05,0.0001,0.001,0.01,0.1,1},
				legend style={at={(1.1,1.05)}, anchor=south},
				legend columns=3,
				transpose legend,
				width=\textwidth,
				]
				\addplot[domain=0.04:0.55, samples=2, thick, forget plot] {10^(\interceptkzero) * x^(\slopekzero)};
				\slopetrileft{\slopekzero}{\interceptkzero}{0.075}{0.2}{\pgfmathprintnumber{\slopekzero}}
				\addplot[thick, colork0, mark=x, mark size=3, mark options={solid}, dotted] table[x index=1, y index=3] {results_interpolation_smooth_solution_errors_k_0.txt};
				\addlegendentry{$\nu=1/2$: interpolation}
				\addplot[domain=0.04:0.55, samples=2, thick, forget plot] {10^(\interceptkone) * x^(\slopekone)};
				\slopetriright{\slopekone}{\interceptkone}{0.15}{0.275}{\pgfmathprintnumber{\slopekone}}
				\addplot[thick, colork1, mark=x, mark size=3, mark options={solid}, dotted] table[x index=1, y index=3] {results_interpolation_smooth_solution_errors_k_1.txt};
				\addlegendentry{$\nu=3/2$: interpolation}
				\addplot[domain=0.04:0.55, samples=2, thick, forget plot] {10^(\interceptktwo) * x^(\slopektwo)};
				\slopetriright{\slopektwo}{\interceptktwo}{0.075}{0.125}{\pgfmathprintnumber{\slopektwo}}
				\addplot[thick, colork2, mark=x, mark size=3, mark options={solid}, dotted] table[x index=1, y index=3] {results_interpolation_smooth_solution_errors_k_2.txt};
				\addlegendentry{$\nu=5/2$: interpolation}
				\addplot[thick, colork0, mark=*, mark size=2, mark options={solid}] table[x index=1, y index=3] {results_smooth_solution_errors_k_0.txt};
				\addlegendentry{$\nu=1/2$: deep Ritz with kernels}
				\addplot[thick, colork1, mark=*, mark size=2, mark options={solid}] table[x index=1, y index=3] {results_smooth_solution_errors_k_1.txt};
				\addlegendentry{$\nu=3/2$: deep Ritz with kernels}
				\addplot[thick, colork2, mark=*, mark size=2, mark options={solid}] table[x index=1, y index=3] {results_smooth_solution_errors_k_2.txt};
				\addlegendentry{$\nu=5/2$: deep Ritz with kernels}
			\end{axis}
		\end{tikzpicture}%
	\end{minipage}%
	\hfill%
	\begin{minipage}[b]{.48\textwidth}
		\centering%
		\begin{tikzpicture}
			\begin{axis}[
				DefaultStyle,
				log basis x={10},
				log basis y={10},
				xlabel={Mesh norm~$h$},
				xmin=0.04, xmax=0.55,
				xmode=log,
				xtick={0.5,0.25,0.1,0.05},
				xticklabels={0.5,0.25,0.1,0.05},
				ylabel={Relative $H^1$-error},
				ymin=1e-5,
				ymax=1,
				ymode=log,
				ytick={1e-08,1e-07,1e-06,1e-05,0.0001,0.001,0.01,0.1,1},
				width=\textwidth,
				]
				\addplot[domain=0.04:0.55, samples=2, thick, forget plot] {10^(\interceptkzeroh) * x^(\slopekzeroh)};
				\slopetrileft{\slopekzeroh}{\interceptkzeroh}{0.075}{0.2}{\pgfmathprintnumber{\slopekzeroh}}
				\addplot[thick, colork0, mark=x, mark size=3, mark options={solid}, dotted] table[x index=1, y index=4] {results_interpolation_smooth_solution_errors_k_0.txt};
				\addplot[thick, colork0, mark=*, mark size=2, mark options={solid}] table[x index=1, y index=4] {results_smooth_solution_errors_k_0.txt};
				\addplot[domain=0.04:0.55, samples=2, thick, forget plot] {10^(\interceptkoneh) * x^(\slopekoneh)};
				\slopetriright{\slopekoneh}{\interceptkoneh}{0.125}{0.275}{\pgfmathprintnumber{\slopekoneh}}
				\addplot[thick, colork1, mark=x, mark size=3, mark options={solid}, dotted] table[x index=1, y index=4] {results_interpolation_smooth_solution_errors_k_1.txt};
				\addplot[thick, colork1, mark=*, mark size=2, mark options={solid}] table[x index=1, y index=4] {results_smooth_solution_errors_k_1.txt};
				\addplot[domain=0.04:0.55, samples=2, thick, forget plot] {10^(\interceptktwoh) * x^(\slopektwoh)};
				\slopetriright{\slopektwoh}{\interceptktwoh}{0.05}{0.1}{\pgfmathprintnumber{\slopektwoh}}
				\addplot[thick, colork2, mark=x, mark size=3, mark options={solid}, dotted] table[x index=1, y index=4] {results_interpolation_smooth_solution_errors_k_2.txt};
				\addplot[thick, colork2, mark=*, mark size=2, mark options={solid}] table[x index=1, y index=4] {results_smooth_solution_errors_k_2.txt};
			\end{axis}
		\end{tikzpicture}%
	\end{minipage}%
	\caption{Relative errors in the~$L^2$-norm (left) and the~$H^1$-norm (right) with respect to the mesh norm~$h$ for~Mat\'{e}rn kernels of smoothness~$\nu \in \{1/2,3/2,5/2\}$ in the example with a smooth solution. The dashed lines indicate the error decays of the interpolation of the exact solution by the respective kernel. The estimated convergence rates for the interpolated solutions are shown as black lines.}
	\label{fig:errors-higher-regularity}
\end{figure}%
We observe that the interpolation of the exact solution and the approximation obtained using the deep Ritz approach show the same asymptotical converge rate. 
Only for~$k=2$ and a small mesh norm of less than~$0.01$, the error of the deep Ritz solution stagnates and does not decrease further when adding more centers.
We expect that this issue can be circumvented with more advanced learning rate scheduling.
We further notice that the~$L^2$-error of the deep Ritz solution is smaller than the interpolation error in most cases.
The deep Ritz solution is a quasi-best approximation with respect to the~$H^1$-norm whereas the interpolation only considers values of the exact solution at the centers.
Approximating the weak solution by means of the deep Ritz solution therefore takes much more information into account in its construction.
Moreover, the approximation error can indeed be smaller than the interpolation error, see also the discussion after~\Cref{thm:approximation-error-estimate}.
Certainly, the interpolation serves solely as a reference here.
In practice, the exact solution is usually unknown and it is impossible to compute its interpolation.
\par
In this example the solution is smooth and we have in particular that~$m>\tau$ for the considered values of~$\tau$.
For instance for~$\nu=1/2$, we therefore expect a convergence rate of~$3/2$ with respect to the~$L^2$-norm according to~\Cref{tab:matern-kernels-regularity}.
Indeed we observe in~\Cref{fig:errors-higher-regularity} a convergence rate of about~$2$, i.e.~an additional rate of~$1/2$.
This effect is an instance of superconvergence, see~\cite{schaback2018superconvergence,karvonen2025general}, 
which is a known phenomenon occurring in the context of kernel based approximation when the function that is approximated is much smoother than the kernel itself.
We thus obtain higher convergence rates as predicted by the theory in both norms and for all values of~$\nu$. 
\par
In~\Cref{fig:errors-higher-regularity-neural-networks} we present a comparison of the errors when using neural networks or Mat\'{e}rn kernels with regularity parameter~$\nu=3/2$ in the deep Ritz method.
\begin{figure}[htbp]%
	\begin{minipage}[b]{.48\textwidth}
		\centering%
		\begin{tikzpicture}
			\begin{axis}[
				DefaultStyle,
				log basis x={10},
				log basis y={10},
				xlabel={Number of parameters},
				xmin=1,
				xmax=500,
				xtick={100,200,300,400,500},
				ylabel={Relative $L^2$-error},
				ymin=1e-4,
				ymax=1,
				ymode=log,
				ytick={1e-08,1e-07,1e-06,1e-05,0.0001,0.001,0.01,0.1,1},
				legend style={at={(1.19,1.05)}, anchor=south},
				legend columns=2,
				width=\textwidth,
				]
				\addplot[thick, colork1, mark=*, mark size=2, mark options={solid}] table[x index=2, y index=3] {results_smooth_solution_errors_k_1.txt};
				\addlegendentry{$\nu=3/2$: deep Ritz with kernels}
				\addplot[thick, colorNN, mark=triangle*, mark size=2, mark options={solid}] table[x index=1, y index=2] {results_neural_network_smooth_solution_gelu_depth2_convergence_results.txt};
				\addlegendentry{deep Ritz with neural networks}
			\end{axis}%
		\end{tikzpicture}%
	\end{minipage}%
	\hfill%
	\begin{minipage}[b]{.48\textwidth}
		\centering%
		\begin{tikzpicture}
			\begin{axis}[
				DefaultStyle,
				log basis x={10},
				log basis y={10},
				xlabel={Number of parameters},
				xmin=1,
				xmax=500,
				xtick={100,200,300,400,500},
				ylabel={Relative $H^1$-error},
				ymin=1e-4,
				ymax=1,
				ymode=log,
				ytick={1e-08,1e-07,1e-06,1e-05,0.0001,0.001,0.01,0.1,1},
				legend style={at={(1.1,1.05)}, anchor=south},
				width=\textwidth,
				]
				\addplot[thick, colork1, mark=*, mark size=2, mark options={solid}] table[x index=2, y index=4] {results_smooth_solution_errors_k_1.txt};
				\addplot[thick, colorNN, mark=triangle*, mark size=2, mark options={solid}] table[x index=1, y index=3] {results_neural_network_smooth_solution_gelu_depth2_convergence_results.txt};
			\end{axis}%
		\end{tikzpicture}%
	\end{minipage}%
	\caption{Relative errors in the~$L^2$-norm (left) and the~$H^1$-norm (right) with respect to the number of parameters for~Mat\'{e}rn kernels of smoothness~$\nu=3/2$ and fully-connected neural networks in the example with a smooth solution.}
	\label{fig:errors-higher-regularity-neural-networks}
\end{figure}%
The relative errors of the neural networks in the~$L^2$-norm are about one order of magnitude larger in comparison to those obtained by the kernel models.
In the~$H^1$-norm, the errors are similar for both methods.
In particular for small numbers of parameters the neural networks seem to lack the expressivity in order to provide errors comparable to the kernel approach.
However, we also observe that in contrast to the error of the kernel methods, the errors of the neural networks still decrease further significantly when increasing the number of parameters in the network.
\par
We further consider the results obtained when assembling the stiffness matrix in~\eqref{equ:definition-stiffness-matrix} as well as the right-hand side and solve the system~\eqref{equ:system-stiffness-matrix} using Cholesky decomposition (the stiffness matrix is positive-definite and dense).
The linear system suffers from a very high condition number for smaller mesh norm, which is a well-known limitation of kernel methods that was discussed in more detail in~\Cref{sec:application-kernels-to-deep-ritz}.
We therefore add a Tikhonov regularization term with regularization factor~$10^{-10}$ when using Cholesky decomposition to solve the system.
Without this regularization, Cholesky decomposition failed in our experiments.
The relative errors with respect to the mesh norm when applying Mat\'{e}rn kernels with different smoothness are shown in~\Cref{fig:errors-higher-regularity-matrix}.
\par
\begin{figure}[htbp]%
	\begin{minipage}[b]{.48\textwidth}
		\centering%
		\begin{tikzpicture}
			\begin{axis}[
				DefaultStyle,
				log basis x={10},
				log basis y={10},
				xlabel={Mesh norm~$h$},
				xmin=0.04, xmax=0.55,
				xmode=log,
				ylabel={Relative $L^2$-error},
				ymin=1e-4,
				ymax=1,
				ymode=log,
				xtick={0.5,0.25,0.1,0.05},
				xticklabels={0.5,0.25,0.1,0.05},
				ytick={1e-08,1e-07,1e-06,1e-05,0.0001,0.001,0.01,0.1,1},
				legend style={at={(1.075,1.05)}, anchor=south},
				legend columns=3,
				transpose legend,
				width=\textwidth,
				]
				\addplot[thick, colork0, mark=x, mark size=3, mark options={solid}, dotted] table[x index=1, y index=3] {results_matrix_form_smooth_solution_errors_k_0.txt};
				\addlegendentry{$\nu=1/2$: solution of linear system}
				\addplot[thick, colork1, mark=x, mark size=3, mark options={solid}, dotted] table[x index=1, y index=3] {results_matrix_form_smooth_solution_errors_k_1.txt};
				\addlegendentry{$\nu=3/2$: solution of linear system}
				\addplot[thick, colork2, mark=x, mark size=3, mark options={solid}, dotted] table[x index=1, y index=3] {results_matrix_form_smooth_solution_errors_k_2.txt};
				\addlegendentry{$\nu=5/2$: solution of linear system}
				\addplot[thick, colork0, mark=*, mark size=2, mark options={solid}] table[x index=1, y index=3] {results_smooth_solution_errors_k_0.txt};
				\addlegendentry{$\nu=1/2$: deep Ritz with kernels}
				\addplot[thick, colork1, mark=*, mark size=2, mark options={solid}] table[x index=1, y index=3] {results_smooth_solution_errors_k_1.txt};
				\addlegendentry{$\nu=3/2$: deep Ritz with kernels}
				\addplot[thick, colork2, mark=*, mark size=2, mark options={solid}] table[x index=1, y index=3] {results_smooth_solution_errors_k_2.txt};
				\addlegendentry{$\nu=5/2$: deep Ritz with kernels}
			\end{axis}
		\end{tikzpicture}%
	\end{minipage}%
	\hfill%
	\begin{minipage}[b]{.48\textwidth}
		\centering%
		\begin{tikzpicture}
			\begin{axis}[
				DefaultStyle,
				log basis x={10},
				log basis y={10},
				xlabel={Mesh norm~$h$},
				xmin=0.04, xmax=0.55,
				xmode=log,
				xtick={0.5,0.25,0.1,0.05},
				xticklabels={0.5,0.25,0.1,0.05},
				ylabel={Relative $H^1$-error},
				ymin=1e-4,
				ymax=1,
				ymode=log,
				ytick={1e-08,1e-07,1e-06,1e-05,0.0001,0.001,0.01,0.1,1},
				width=\textwidth,
				]
				\addplot[thick, colork0, mark=x, mark size=3, mark options={solid}, dotted] table[x index=1, y index=4] {results_matrix_form_smooth_solution_errors_k_0.txt};
				\addplot[thick, colork0, mark=*, mark size=2, mark options={solid}] table[x index=1, y index=4] {results_smooth_solution_errors_k_0.txt};
				\addplot[thick, colork1, mark=x, mark size=3, mark options={solid}, dotted] table[x index=1, y index=4] {results_matrix_form_smooth_solution_errors_k_1.txt};
				\addplot[thick, colork1, mark=*, mark size=2, mark options={solid}] table[x index=1, y index=4] {results_smooth_solution_errors_k_1.txt};
				\addplot[thick, colork2, mark=x, mark size=3, mark options={solid}, dotted] table[x index=1, y index=4] {results_matrix_form_smooth_solution_errors_k_2.txt};
				\addplot[thick, colork2, mark=*, mark size=2, mark options={solid}] table[x index=1, y index=4] {results_smooth_solution_errors_k_2.txt};
			\end{axis}
		\end{tikzpicture}%
	\end{minipage}%
	\caption{Relative errors in the~$L^2$-norm (left) and the~$H^1$-norm (right) with respect to the mesh norm~$h$ for~Mat\'{e}rn kernels of smoothness~$\nu \in \{1/2,3/2,5/2\}$ using the energy minimization problem and the linear system of equations in the example with a smooth solution.}
	\label{fig:errors-higher-regularity-matrix}
\end{figure}%
We observe that already for mesh norms of about~$0.01$ the solution obtained by solving the linear system are worse than those computed by the deep Ritz method.
For smaller mesh sizes, the relative errors even start to increase which results in an error that is roughly three orders of magnitude larger than the corresponding error of the deep Ritz solution.
Moreover, this behavior is the same independent of the smoothness of the kernel.
For~$\nu=5/2$ the problem is even more pronounced than for~$\nu=1/2$ or~$\nu=3/2$.
The condition numbers of the stiffness matrix range from~$4\cdot10^3$ (for~$\nu=1/2$ and~$n=1$) to~$1.2\cdot10^{15}$ (for~$\nu=5/2$ and~$n=20$).
Solving a linear system with the ill-conditioned stiffness matrix thus constitutes an issue for the linear solver resulting in inaccurate approximations as apparent from~\Cref{fig:errors-higher-regularity-matrix}.
\par
The Adam optimizer as we use it here can be interpreted as an iterative linear solver for the underlying system.
We therefore compare it to the conjugate gradient~(CG) method~\cite{hestenes1952methods} that is one of the standard solvers for linear systems with symmetric, positive-definite system matrix.
The~CG~method iteratively minimizes the residual of the linear system measured in the~$A$-norm, where~$A$ is the (symmetric and positive-definite) system matrix.
In our case, the residual in the~$A$-norm deviates from the Dirichlet energy only by a constant factor, namely the energy of the exact solution of the linear system.
The~CG~method therefore also minimizes an equivalent energy as used for Adam in an iterative manner.
There is, however, one important difference between solving a fixed assembled system and the method we proposed here:
In every iteration of the optimization algorithm, we use a different random quadrature for computing the gradient of the Dirichlet energy, i.e.~we are essentially doing stochastic gradient descent.
For the matrix-based solution using Cholesky decomposition or the~CG~method, we first assemble the system matrix using a fixed quadrature rule and solve a fixed linear system with Tikhonov regularization factor~$10^{-10}$.
It is therefore of interest to investigate the role of the random quadrature and the iterative solver.
In~\Cref{fig:higher-regularity-loss-evolution}, we compare the loss (in terms of Dirichlet energy of the solution) over the iterations performed by the~CG~method and by the Adam optimizer.
For Adam, we performed the same experiment once using a fixed quadrature and once drawing random quadrature points in every epoch.
It turns out that Adam does not require any Tikhonov-type regularization in this case.
The experiment considers a fixed mesh norm of~$0.05$ and smoothness~$\nu=5/2$ for the~Mat\'{e}rn kernel.
We further summarize the final loss as well as~$L^2$- and~$H^1$-errors obtained by the different methods in~\Cref{tab:higher-regularity-iterative-methods-errors}.
\par
\begin{figure}[htbp]%
	\centering%
	\begin{tikzpicture}
		\begin{axis}[
			DefaultStyle,
			log basis x={10},
			log basis y={10},
			xlabel={Iterations/Epochs},
			xmin=1, xmax=10000,
			xmode=log,
			ylabel={Energy/Loss},
			ymin=-85,
			ymax=15,
			legend style={at={(0.5,1.05)}, anchor=south},
			legend columns=3,
			width=\textwidth,
			height=5cm,
			legend reversed=true,
			]
			\addplot[thick, viridisYellow] table[x index=0, y index=1, x expr=\thisrowno{0}+1] {results_smooth_solution_matern_k2_singlerun_loss_history_2_1.0_20_10000_reduced.txt};
			\addlegendentry{Adam (random quadrature)}
			\addplot[thick, viridisGreen] table[x index=0, y index=1, x expr=\thisrowno{0}+1] {results_smooth_solution_matern_k2_fixed_loss_history_2_1.0_20_10000_reduced.txt};
			\addlegendentry{Adam (fixed quadrature)}
			\addplot[thick, viridisBlue] table[x index=0, y index=1, x expr=\thisrowno{0}+1] {results_matrix_form_smooth_solution_cg_uniform_cg_energy_k_2_n_20_reduced.txt};
			\addlegendentry{CG (fixed quadrature)}
		\end{axis}
	\end{tikzpicture}%
	\caption{Dirichlet energy (loss) over the iterations for the conjugate gradient method, Adam with fixed quadrature and Adam with random quadrature in every epoch.}
	\label{fig:higher-regularity-loss-evolution}
\end{figure}
\begin{table}[htbp]%
	\centering%
	\begin{tabular}{c|ccc}
		\toprule
		Method & Loss & $L^2$-error & $H^1$-error \\
		\midrule\midrule
		CG (fixed quadrature) & $-73.37$ & $\hphantom{0}1.9\cdot10^{-2}$ & $\hphantom{0}1.3\cdot10^{-1}$ \\
		Adam (fixed quadrature) & $-73.52$ & $\hphantom{0}1.7\cdot10^{-1}$ & $2.06\cdot10^{0\hphantom{-}}$ \\
		Adam (random quadrature) & $-72.04$ & $2.26\cdot10^{-4}$ & $2.79\cdot10^{-3}$ \\
		\bottomrule
	\end{tabular}
	\caption{Dirichlet energy (loss) and relative errors in the~$L^2$-norm and the~$H^1$-norm at the final iteration for the conjugate gradient method, Adam with fixed quadrature and Adam with random quadrature in every epoch.}
	\label{tab:higher-regularity-iterative-methods-errors}
\end{table}
The optimization results over the iterations shows some interesting behaviors:
First of all, we note that using a fixed quadrature leads to both~CG and~Adam converging relatively fast after around~$100$ iterations to a similar loss value.
However, \Cref{tab:higher-regularity-iterative-methods-errors} reveals that the corresponding solutions are fundamentally different in terms of their errors.
The solution produced by Adam has errors around one order of magnitude larger compared to the solution of the~CG~method.
Looking at the coefficients in the respective kernel approximation, we observe that their magnitudes become very large when using Adam as optimizer.
These two observations together suggest that Adam overfits the training points while the solution oscillates outside of the training data.
When using the~CG~algorithm, this has been avoided by introducing a Tikhonov regularization term.
As a second important outcome, we remark that the loss of Adam when using random quadrature points in every iteration oscillates wildly over the optimization epochs.
This behavior is caused by the changing objective function in each iteration due to the random quadrature.
The loss drops even below~$-81$ in this case.
Nevertheless, we do not pick the kernel surrogate corresponding to the smallest loss since this might suffer from the same overfitting issue we observed for Adam with fixed quadrature.
We instead choose the approximation obtained in the last iteration of the optimization.
During the optimization with random quadrature, the optimizer makes use of information originating from many more quadrature points than it sees when fixing the quadrature a priori.
Although the loss at the final iteration is larger than for~CG or Adam with fixed quadrature, the~$L^2$- and~$H^1$-errors are significantly smaller when using stochastic quadrature rules, see~\Cref{tab:higher-regularity-iterative-methods-errors}.
Consequently, we conclude that the main benefit of our proposed method is that it naturally allows to explore many more quadrature points by sampling them in every iteration instead of using fixed ones and computing the stiffness matrix beforehand.
\par
In summary, the deep Ritz method using kernels provides superior results in terms of approximation errors compared to neural networks with a similar number of parameters for this example with a smooth solution.
Moreover, by using the formulation as an energy minimization problem we are able to circumvent the issue of a large condition number that arises when assembling the stiffness matrix.
The computational effort is further comparable for all learning-based methods discussed in this section.

\subsection{Singular solution on a non-convex domain}\label{sec:singular-solution}
In the previous example, the solution was smooth and thus the convergence rate of the deep Ritz method using kernels was limited by the regularity of the applied kernel.
Here, we now consider a problem where the solution is singular and the convergence rate is determined by the smoothness of the weak solution of the~PDE.
Let~$\Omega\subset\R^2$ be a non-convex domain defined as
\begin{align*}
	\Omega = \{(r\cos(\varphi),r\sin(\varphi))\in\R^2:0<r<1.5,0<\varphi<\alpha\}\subset\R^2.
\end{align*}
We further consider the same diffusion problem as in~\eqref{equ:diffusion-problem-strong-form-deep-Ritz} for~$f(x)=0$ and
\begin{align*}
	g(x) = \norm{x}_2^{1/\alpha}\cdot\sin\left(\frac{\arctan(x_2/x_1)}{\alpha}\right) + 1
\end{align*}
for~$x=(x_1,x_2)\in\bar{\Omega}$.
The solution is again given as~$u=g$.
We set the angle to~$\alpha=\frac{3\pi}{2}$.
Then, we have~$u\in H^{\frac{2}{3\pi}+1-\varepsilon}(\Omega)$ for all~$0<\varepsilon\leq\frac{2}{3\pi}+1$ and~$u\notin H^{l}(\Omega)$ for~$l\geq\frac{2}{3\pi}+1\approx 1.2122$.
A plot of the exact solution is shown in~\Cref{fig:solution-plot-singular}.
\par
\begin{figure}[htbp]%
	\centering
	\begin{tikzpicture}
		\begin{polaraxis}[
			width=.5\textwidth,
			tickwidth=0,
			xtick distance=45,
			separate axis lines,
			ytick={0.5,1,1.5},
			yticklabels={0.5,1,1.5},
			yticklabel shift=-15pt,
			colorbar,
			colormap name={viridis},
			axis on top=true,
			]
			\addplot3[surf, shader=interp, samples=75, y domain=0:1.5, domain=0:270] {y^(2/(3*pi))*sin(deg(2*(x*pi/180)/(3*pi))) + 1};
		\end{polaraxis}
	\end{tikzpicture}
	\caption{Top view plot of the exact solution in the example with a non-convex domain resulting in a singular solution. The singularity occurs in the origin due to steep gradients.}
	\label{fig:solution-plot-singular}
\end{figure}%
Similar to the previous example we consider~$n\in\{1,2,4,6,8,10,12,14,16,18,20\}$ centers per dimension in the square~$[-1.5,1.5]^2$ and remove those centers that are not in the interior of the domain~$\Omega$.
Further, we add~$4n+4$ points on the boundary~$\partial\Omega$.
\par
\Cref{fig:errors-singular} shows the decay of the relative~$L^2$-error when using~Mat\'{e}rn kernels of different smoothness to approximate the singular solution.
\pgfplotstableread[skip first n=5]{results_interpolation_singular_solution_errors_k_2.txt}\tab%
\pgfplotstablecreatecol[create col/expr={ln(\thisrowno{1})/ln(10)}]{logh}\tab%
\pgfplotstablecreatecol[create col/expr={ln(\thisrowno{3})/ln(10)}]{logL2}\tab%
\pgfplotstablecreatecol[linear regression={x=logh, y=logL2}]{regr}\tab%
\xdef\slopektwo{\pgfplotstableregressiona}\xdef\interceptktwo{\pgfplotstableregressionb}%
\pgfplotstableread[skip first n=5]{results_interpolation_singular_solution_errors_k_2.txt}\tab%
\pgfplotstablecreatecol[create col/expr={ln(\thisrowno{1})/ln(10)}]{logh}\tab%
\pgfplotstablecreatecol[create col/expr={ln(\thisrowno{4})/ln(10)}]{logL2}\tab%
\pgfplotstablecreatecol[linear regression={x=logh, y=logL2}]{regr}\tab%
\xdef\slopektwoh{\pgfplotstableregressiona}\xdef\interceptktwoh{\pgfplotstableregressionb}%
\begin{figure}[htbp]%
	\begin{minipage}[b]{.48\textwidth}
		\centering%
		\begin{tikzpicture}
			\begin{axis}[
				DefaultStyle,
				log basis x={10},
				log basis y={10},
				xlabel={Mesh norm~$h$},
				xmin=0.04, xmax=0.55,
				xmode=log,
				xtick={0.5,0.25,0.1,0.05},
				xticklabels={0.5,0.25,0.1,0.05},
				ylabel={Relative $L^2$-error},
				ymin=1e-3,
				ymax=0.25,
				ymode=log,
				ytick={1e-08,1e-07,1e-06,1e-05,0.0001,0.001,0.01,0.1},
				legend style={at={(1.1,1.05)}, anchor=south},
				legend columns=3,
				transpose legend,
				width=\textwidth,
				]
				\addplot[thick, colork0, mark=x, mark size=3, mark options={solid}, dotted] table[x index=1, y index=3] {results_interpolation_singular_solution_errors_k_0.txt};
				\addlegendentry{$\nu=1/2$: interpolation}
				\addplot[thick, colork1, mark=x, mark size=3, mark options={solid}, dotted] table[x index=1, y index=3] {results_interpolation_singular_solution_errors_k_1.txt};
				\addlegendentry{$\nu=3/2$: interpolation}
				\addplot[thick, colork2, mark=x, mark size=3, mark options={solid}, dotted] table[x index=1, y index=3] {results_interpolation_singular_solution_errors_k_2.txt};
				\addlegendentry{$\nu=5/2$: interpolation}
				\addplot[thick, colork0, mark=*, mark size=2, mark options={solid}] table[x index=1, y index=3] {results_singular_solution_errors_k_0.txt};
				\addlegendentry{$\nu=1/2$: deep Ritz with kernels}
				\addplot[thick, colork1, mark=*, mark size=2, mark options={solid}] table[x index=1, y index=3] {results_singular_solution_errors_k_1.txt};
				\addlegendentry{$\nu=3/2$: deep Ritz with kernels}
				\addplot[domain=0.04:0.55, samples=2, thick, forget plot] {10^(\interceptktwo) * x^(\slopektwo)};
				\slopetrileft{\slopektwo}{\interceptktwo}{0.075}{0.2}{\pgfmathprintnumber{\slopektwo}}
				\addplot[thick, colork2, mark=*, mark size=2, mark options={solid}] table[x index=1, y index=3] {results_singular_solution_errors_k_2.txt};
				\addlegendentry{$\nu=5/2$: deep Ritz with kernels}
			\end{axis}
		\end{tikzpicture}%
	\end{minipage}%
	\hfill%
	\begin{minipage}[b]{.48\textwidth}
		\centering%
		\begin{tikzpicture}
			\begin{axis}[
				DefaultStyle,
				log basis x={10},
				log basis y={10},
				xlabel={Mesh norm~$h$},
				xmin=0.04, xmax=0.55,
				xmode=log,
				xtick={0.5,0.25,0.1,0.05},
				xticklabels={0.5,0.25,0.1,0.05},
				ylabel={Relative $H^1$-error},
				ymin=5e-2,
				ymax=0.5,
				ymode=log,
				ytick={0.1,0.25},
				yticklabels={0.1,0.25},
				width=\textwidth,
				]
				\addplot[thick, colork0, mark=x, mark size=3, mark options={solid}, dotted] table[x index=1, y index=4] {results_interpolation_singular_solution_errors_k_0.txt};
				\addplot[thick, colork0, mark=*, mark size=2, mark options={solid}] table[x index=1, y index=4] {results_singular_solution_errors_k_0.txt};
				\addplot[thick, colork1, mark=x, mark size=3, mark options={solid}, dotted] table[x index=1, y index=4] {results_interpolation_singular_solution_errors_k_1.txt};
				\addplot[thick, colork1, mark=*, mark size=2, mark options={solid}] table[x index=1, y index=4] {results_singular_solution_errors_k_1.txt};
				\addplot[domain=0.04:0.55, samples=2, thick, forget plot] {10^(\interceptktwoh) * x^(\slopektwoh)};
				\slopetrileft{\slopektwoh}{\interceptktwoh}{0.075}{0.2}{\pgfmathprintnumber{\slopektwoh}}
				\addplot[thick, colork2, mark=x, mark size=3, mark options={solid}, dotted] table[x index=1, y index=4] {results_interpolation_singular_solution_errors_k_2.txt};
				\addplot[thick, colork2, mark=*, mark size=2, mark options={solid}] table[x index=1, y index=4] {results_singular_solution_errors_k_2.txt};
			\end{axis}
		\end{tikzpicture}%
	\end{minipage}%
	\caption{Relative errors in the~$L^2$-norm (left) and the~$H^1$-norm (right) with respect to the mesh norm~$h$ for~Mat\'{e}rn kernels of smoothness~$\nu \in \{1/2,3/2,5/2\}$ in the example with a singular solution. The dashed lines indicate the error decays of the interpolation of the exact solution by the respective kernel. The estimated convergence rate for the interpolation with~$\nu=5/2$ is shown as a black line.}
	\label{fig:errors-singular}
\end{figure}%
As we are in the ``escaping the native space'' case (smoothness of the solution smaller than smoothness of the kernel), 
we expect the error to decay according to the smoothness of the solution and independent of the smoothness of the kernel function.
This behavior can be observed in~\Cref{fig:errors-singular}, as the three plots for the different degrees of smoothness follow the same asymptotic.
We observe a decay rate of the error with respect to the~$L^2$-norm of~$1.4$ which roughly agrees with the regularity of the solution~$u$ of about~$1.21$.
Similarly as in the previous example, the decay rate with respect to the~$H^1$-norm is one order smaller than for the~$L^2$-norm.
The superconvergence that was present in the previous example cannot be observed here since the exact solution has a lower regularity than the kernels.
However, although the solution is singular, it can still be approximated accurately by smooth kernels.
\par
In the beginning (for mesh norms between~$2 \cdot 10^{-2}$ and~$5 \cdot 10^{-2}$) one can observe a preasymptotic behavior,
which is due to the small amount of centers that is not sufficient to properly cover the domain.
The results in~\Cref{fig:errors-singular} also show again that the solution obtained by the deep Ritz method results in smaller errors compared to the interpolation of the exact solution.
\par
In~\Cref{fig:errors-singular-neural-networks} we depict the relative errors for neural networks and Mat\'{e}rn kernels with~$\nu=3/2$ when used in the deep Ritz method.
\begin{figure}[htbp]%
	\begin{minipage}[b]{.48\textwidth}
		\centering%
		\begin{tikzpicture}
			\begin{axis}[
				DefaultStyle,
				log basis x={10},
				log basis y={10},
				xlabel={Number of parameters},
				xmin=1,
				xmax=500,
				xtick={100,200,300,400,500},
				ylabel={Relative $L^2$-error},
				ymin=1e-3,
				ymax=0.25,
				ymode=log,
				ytick={1e-08,1e-07,1e-06,1e-05,0.0001,0.001,0.01,0.1},
				legend style={at={(1.19,1.05)}, anchor=south},
				legend columns=2,
				width=\textwidth,
				]
				\addplot[thick, colork1, mark=*, mark size=2, mark options={solid}] table[x index=2, y index=3] {results_singular_solution_errors_k_1.txt};
				\addlegendentry{$\nu=3/2$: deep Ritz with kernels}
				\addplot[thick, colorNN, mark=triangle*, mark size=2, mark options={solid}] table[x index=1, y index=2] {results_neural_network_singular_solution_gelu_depth2_convergence_results.txt};
				\addlegendentry{deep Ritz with neural networks}
			\end{axis}%
		\end{tikzpicture}%
	\end{minipage}%
	\hfill%
	\begin{minipage}[b]{.48\textwidth}
		\centering%
		\begin{tikzpicture}
			\begin{axis}[
				DefaultStyle,
				log basis x={10},
				log basis y={10},
				xlabel={Number of parameters},
				xmin=1,
				xmax=500,
				xtick={100,200,300,400,500},
				ylabel={Relative $H^1$-error},
				ymin=5e-2,
				ymax=0.5,
				ymode=log,
				ytick={0.1,0.25},
				yticklabels={0.1,0.25},
				legend style={at={(1.1,1.05)}, anchor=south},
				width=\textwidth,
				]
				\addplot[thick, colork1, mark=*, mark size=2, mark options={solid}] table[x index=2, y index=4] {results_singular_solution_errors_k_1.txt};
				\addplot[thick, colorNN, mark=triangle*, mark size=2, mark options={solid}] table[x index=1, y index=3] {results_neural_network_singular_solution_gelu_depth2_convergence_results.txt};
			\end{axis}%
		\end{tikzpicture}%
	\end{minipage}%
	\caption{Relative errors in the~$L^2$-norm (left) and the~$H^1$-norm (right) with respect to the number of parameters for~Mat\'{e}rn kernels of smoothness~$\nu=3/2$ and fully-connected neural networks in the example with a singular solution.}
	\label{fig:errors-singular-neural-networks}
\end{figure}%
In this example, the kernel approximants and the neural networks perform quite similar. The~$H^1$-error of the neural networks is slightly smaller compared to the kernel methods.
We attribute this to the adaptive nature of the neural networks,
which can better deal with the localized singularity.
\par
Finally, we again investigate the performance of the solution computed by solving the weak formulation via the linear system in~\eqref{equ:system-stiffness-matrix}. In~\Cref{fig:errors-singular-matrix} we compare the results to the energy minimization approach using kernels.
\begin{figure}[htbp]%
	\begin{minipage}[b]{.48\textwidth}
		\centering%
		\begin{tikzpicture}
			\begin{axis}[
				DefaultStyle,
				log basis x={10},
				log basis y={10},
				xlabel={Mesh norm~$h$},
				xmin=0.04, xmax=0.55,
				xmode=log,
				xtick={0.5,0.25,0.1,0.05},
				xticklabels={0.5,0.25,0.1,0.05},
				ylabel={Relative $L^2$-error},
				ymin=1e-3,
				ymax=0.25,
				ymode=log,
				ytick={1e-08,1e-07,1e-06,1e-05,0.0001,0.001,0.01,0.1},
				legend style={at={(1.075,1.05)}, anchor=south},
				legend columns=3,
				transpose legend,
				width=\textwidth,
				]
				\addplot[thick, colork0, mark=x, mark size=3, mark options={solid}, dotted] table[x index=1, y index=3] {results_matrix_form_singular_solution_errors_k_0.txt};
				\addlegendentry{$\nu=1/2$: solution of linear system}
				\addplot[thick, colork1, mark=x, mark size=3, mark options={solid}, dotted] table[x index=1, y index=3] {results_matrix_form_singular_solution_errors_k_1.txt};
				\addlegendentry{$\nu=3/2$: solution of linear system}
				\addplot[thick, colork2, mark=x, mark size=3, mark options={solid}, dotted] table[x index=1, y index=3] {results_matrix_form_singular_solution_errors_k_2.txt};
				\addlegendentry{$\nu=5/2$: solution of linear system}
				\addplot[thick, colork0, mark=*, mark size=2, mark options={solid}] table[x index=1, y index=3] {results_singular_solution_errors_k_0.txt};
				\addlegendentry{$\nu=1/2$: deep Ritz with kernels}
				\addplot[thick, colork1, mark=*, mark size=2, mark options={solid}] table[x index=1, y index=3] {results_singular_solution_errors_k_1.txt};
				\addlegendentry{$\nu=3/2$: deep Ritz with kernels}
				\addplot[thick, colork2, mark=*, mark size=2, mark options={solid}] table[x index=1, y index=3] {results_singular_solution_errors_k_2.txt};
				\addlegendentry{$\nu=5/2$: deep Ritz with kernels}
			\end{axis}
		\end{tikzpicture}%
	\end{minipage}%
	\hfill%
	\begin{minipage}[b]{.48\textwidth}
		\centering%
		\begin{tikzpicture}
			\begin{axis}[
				DefaultStyle,
				log basis x={10},
				log basis y={10},
				xlabel={Mesh norm~$h$},
				xmin=0.04, xmax=0.55,
				xmode=log,
				xtick={0.5,0.25,0.1,0.05},
				xticklabels={0.5,0.25,0.1,0.05},
				ylabel={Relative $H^1$-error},
				ymin=5e-2,
				ymax=0.5,
				ymode=log,
				ytick={0.1,0.25},
				yticklabels={0.1,0.25},
				width=\textwidth,
				]
				\addplot[thick, colork0, mark=x, mark size=3, mark options={solid}, dotted] table[x index=1, y index=4] {results_matrix_form_singular_solution_errors_k_0.txt};
				\addplot[thick, colork0, mark=*, mark size=2, mark options={solid}] table[x index=1, y index=4] {results_singular_solution_errors_k_0.txt};
				\addplot[thick, colork1, mark=x, mark size=3, mark options={solid}, dotted] table[x index=1, y index=4] {results_matrix_form_singular_solution_errors_k_1.txt};
				\addplot[thick, colork1, mark=*, mark size=2, mark options={solid}] table[x index=1, y index=4] {results_singular_solution_errors_k_1.txt};
				\addplot[thick, colork2, mark=x, mark size=3, mark options={solid}, dotted] table[x index=1, y index=4] {results_matrix_form_singular_solution_errors_k_2.txt};
				\addplot[thick, colork2, mark=*, mark size=2, mark options={solid}] table[x index=1, y index=4] {results_singular_solution_errors_k_2.txt};
			\end{axis}
		\end{tikzpicture}%
	\end{minipage}%
	\caption{Relative errors in the~$L^2$-norm (left) and the~$H^1$-norm (right) with respect to the mesh norm~$h$ for~Mat\'{e}rn kernels of smoothness~$\nu \in \{1/2,3/2,5/2\}$ using the energy minimization problem and the linear system of equations in the example with a singular solution.}
	\label{fig:errors-singular-matrix}
\end{figure}%
The errors of both approaches, based on the linear system and the energy optimization, decay at the same rate.
Compared to the example in~\Cref{sec:high-regularity} we further observe that only for the smallest mesh norm and~$\nu=5/2$ the error of the solution based on the linear system is a bit larger.
The condition numbers in this example vary between~$2.7\cdot10^2$ and~$4.4\cdot10^{14}$.
However, in this case the effect seems to be less pronounced compared to the example with a smooth solution.
\par
This example shows that the deep Ritz method using kernels as ansatz functions can also be applied in settings where the weak solution has only low regularity.
Furthermore, the convergence rate is not influenced by the smoothness of the kernel in this case, which allows to apply kernels with high smoothness as well.
We observed that the kernel methods performed similar as the neural networks in this case of a singular solution.
However, we emphasize again that the observed convergence rate is underpinned by our theoretical results from~\Cref{sec:a-priori-error-estimation}.

\subsection{High-dimensional example}\label{sec:high-dimensional-example}
In this subsection we consider the high-dimensional example of \cite[Section 4.3]{haasdonk2025kernel}:
The problem of interest is the Poisson problem 
\begin{align*}
	-\Delta u(x) = \|\kappa\|_2^2\, \sin\left(x^\top\kappa\right), \qquad\text{for } x\in \Omega,
\end{align*}
on the domain~$\Omega = [0, 1]^{10}$. 
The solution is given by
\begin{align*}
	u(x) = \sin\left(x^\top\kappa\right), \qquad \text{for }x\in \Omega.
\end{align*}
The vector~$\kappa \in \R^{10}$ induces a strong directional dependence and is chosen as~$\frac{\pi}{10} (1,\ldots,1)^\top \in \R^{10}$.
\par
We apply the proposed method to this high-dimensional problem,
comparing standard kernel models, two-layered kernel models and neural networks.
Due to the multilayer setup, the two-layered kernel models and the neural networks are able to automatically
adapt to the strong directional dependence during optimization.
Therefore these two methods are expected to outperform standard radial kernels.
\par
In ten dimensions, using a tensor-product grid of uniform centers would result in the curse of dimensionality.
For both the standard and the two-layered kernel models, we therefore sample points in~$\Omega$ using the~$P$-greedy algorithm, which is known to produce well distributed points~\cite{wenzel2021novel}.
\par
For the kernel models, we tested Mat\'{e}rn kernels with shape parameter~$0.1$ using three different smoothness parameters~$\nu\in\{1/2,3/2,5/2\}$.
For the neural networks, we used as before a depth~$2$ network with~GELU activation function.
We tried neural networks of widths~$8$, $16$ and~$32$ per layer and kernel expansions with~$10$, $20$ or~$30$ centers, but only report the best model among all expansion sizes (width respectively number of centers) in order to keep the results concise.
The influence of these parameters turned out to be rather insignificant in this example.
\par
An overview of the results is presented in~\Cref{tab:errors-high-dimensional}:
One can observe that the standard kernel models are not able to reach a good accuracy, 
which can be attributed to their radial nature.
Both the two-layered kernels and the neural networks achieve significantly better results,
with best~$L^2$-errors of about~$4 \cdot 10^{-3}$ and~$5 \cdot 10^{-3}$ respectively.
\begin{table}[htbp]%
	\centering
	\begin{tabular}{l|ccc}
		\toprule
		Method & $\nu$ & $L^2$-error & $H^1$-error \\
		\midrule\midrule
		Flat kernel & $1/2$ & $5.568 \cdot 10^{-2}$ & $2.678 \cdot 10^{-1}$ \\
		Flat kernel & $3/2$ & $5.650 \cdot 10^{-2}$ & $2.744 \cdot 10^{-1}$ \\
		Flat kernel & $5/2$ & $5.670 \cdot 10^{-2}$ & $2.760 \cdot 10^{-1}$ \\
		\midrule
		Two-layered kernel & $1/2$ & $7.136 \cdot 10^{-3}$ & $4.777 \cdot 10^{-2}$ \\
		Two-layered kernel & $3/2$ & $4.458 \cdot 10^{-3}$ & $3.120 \cdot 10^{-2}$ \\
		Two-layered kernel & $5/2$ & $3.868 \cdot 10^{-3}$ & $2.706 \cdot 10^{-2}$ \\
		\midrule
		Neural networks & -- & $4.659 \cdot 10^{-3}$ & $3.122 \cdot 10^{-2}$ \\
		\bottomrule
	\end{tabular}
	\caption{Best result per expansion size (selected by lowest loss across all expansion sizes) for the high-dimensional example.}
	\label{tab:errors-high-dimensional}
\end{table}

\section{Conclusion and outlook}\label{sec:outlook-conclusion}
In this contribution we proposed the application of kernel methods as ansatz functions in the deep Ritz approach and analyzed its behavior theoretically as well as numerically.
In the original work~\cite{e2018deepritz} where the deep Ritz approach was introduced, the authors used deep neural networks and optimized the weights of the networks via minimization of an energy functional that is equivalent to the weak formulation of an elliptic~PDE.
In contrast, we employ kernel models and determine their coefficients by solving the optimization problem.
We moreover propose to use two-layered kernel models in particular for high-dimensional and structured problems, which introduce a nonlinear dependency on their parameters and therefore require optimization techniques instead of solvers for linear systems.
\par
The equivalence to the weak formulation allows for rigorous error analysis using standard techniques from numerical analysis.
We thus obtain a priori convergence estimates for the proposed method.
Such a priori error estimates are hard to derive for neural networks since the set of neural networks does not form a linear space and therefore results such as~C\'{e}a's~Lemma are not applicable in that case.
For kernel methods we prove convergence results with respect to the~$H^1$-norm and the~$L^2$-norm in this contribution.
These results show that the convergence rate of the approximation in terms of the mesh norm depends on the smoothness of the weak solution and on the regularity of the kernel.
It is therefore possible to obtain smaller errors by reducing the mesh norm, whereas for neural networks it is typically unclear what part of the neural network architecture to adjust, i.e.~whether to increase the number of layers or the number of neurons or whether to choose a different activation function.
Additionally, the solution can even be improved locally in space by adding more centers in certain parts of the domain -- and we aim to follow up with this, e.g.~by leveraging greedy kernel methods~\cite{wenzel2023analysis}.
Certainly, kernel methods involve several hyperparameters as well, such as the shape parameter, but the convergence results are independent of these parameters and are asymptotically only influenced by the smoothness of the kernel.
\par
We examined the performance of kernel methods within the deep Ritz approach by means of three numerical examples.
In the first experiment we considered a~PDE with a smooth solution such that the convergence rate is limited by the smoothness of the kernel according to~\Cref{thm:a-priori-error-estimate-h1,thm:a-priori-error-estimate-l2}.
The second experiment is based on a non-convex domain with a re-entrant corner that leads to a singular solution with low regularity.
As expected, the numerical experiment shows that the convergence rate is independent of the smoothness of the kernel and instead determined by the regularity of the solution.
In the second case, we moreover observe that neural networks perform similarly or even slightly better than kernel methods, whereas in the first case kernel methods are preferable to neural networks due to the smoothness of the solution.
In the third experiment, we consider a ten-dimensional spatial domain, which renders the usage of traditional methods such as finite elements infeasible.
This test case shows that two-layered kernel approaches are capable of achieving similar errors than neural networks also in high-dimensional examples, whereas flat kernel approaches might have limited applicability in those scenarios.
In all of our numerical experiments, the issue of large condition numbers of the stiffness matrix arises, which motivates again to solve the energy minimization problem instead of the linear system in the case of kernels as ansatz functions.
As we have seen numerically, randomly sampling the quadrature points in every optimization step improves the performance significantly compared to using a fixed system matrix.
\par
Meshless methods such as the kernel approaches based on the deep Ritz ansatz as described in this paper circumvent the necessity of computing a computational mesh.
Furthermore, the algorithms described in this contribution can be implemented with relatively small effort.
A (local) refinement of the solution is also readily possible by adding more centers.
Finite element methods in contrast would require a mesh refinement or a higher order approach to improve the solution.
\par
As potential extensions of the approach described here, one might for instance consider other problem classes, as already mentioned in~\Cref{rem:non-symmetric-problems}.
Moreover, the deep Ritz method has recently been considered for a nonlinear elliptic~PDE involving the~$p$-Laplacian~\cite{kaltenbach2025deepRitz}.
Such~PDEs possess an energy minimization formulation but do not allow for an equivalent linear system.
In such cases, using kernel methods with an iterative optimizer of the energy might be of interest.
\par
Another application where in particular two-layered kernels could become useful are parametric problems.
The parameter might be treated as an additional input component of the kernel approximant.
As objective function one could consider the integral of the parameter-dependent energy functionals over the parameter.
We assume that an anisotropic kernel would be indispensable for these cases in order to account for different scales in space and parameter.

\section*{Acknowledgements}

The authors would like to thank Lukas Renelt for providing ideas, in particular regarding~\Cref{lem:inexact-solution-optimization-problem}, and Julia Schleu\ss{} for several inspiring discussions.
Furthermore, the authors thank the anonymous reviewers for providing excellent comments that led to new insights and improved the manuscript significantly.

\section*{Declarations}

\paragraph{Funding}
\begin{itemize}
	\item H.~Kleikamp acknowledges funding by the Deutsche Forschungsgemeinschaft (DFG, German Research Foundation) under Germany's Excellence Strategy EXC 2044 --390685587, Mathematics M\"{u}nster: Dynamics--Geometry--Structure.
	\item T.~Wenzel acknowledges financial support through the projects LD-SODA of the {\em Landes\-for\-schungs\-f\"or\-de\-rung Hamburg} (LFF) and support from the RTG~2583 ``Modeling, Simulation and Optimization of Fluid Dynamic Applications'' funded by the {\em Deutsche Forschungsgemeinschaft} (DFG) and funding by Daimler and Benz Foundation as part of the scholarship program for junior professors and postdoctoral researchers.
\end{itemize}

\paragraph{Conflict of interest/Competing interests}
The authors have no relevant financial or non-financial interests to disclose.

\paragraph{Data availability}
The data used to produce the plots for the numerical experiments is provided in the source code repository, see~\cite{sourcecode}.

\paragraph{Code availability} The source code used to carry out the numerical experiments presented in this contribution can be found in~\cite{sourcecode}.

\paragraph{Author contribution}

\begin{itemize}
	\item H.~Kleikamp: Conceptualization, Methodology, Software, Validation, Formal analysis, Investigation, Writing -- Original Draft, Writing -- Review \& Editing, Visualization
	\item T.~Wenzel: Conceptualization, Methodology, Software, Validation, Formal analysis, Investigation, Writing -- Original Draft, Writing -- Review \& Editing, Visualization
\end{itemize}

\appendix
\section{Numerical results for Wendland kernels}\label{app:numerical-results-kernels}
In this part of the appendix, we show the numerical results for the family of Wendland kernels~\cite{wendland2004scattered}.
The regularity of a Wendland kernel is determined by a parameter~$p\in\N$ and the resulting native space is equivalent to the Sobolev space~$H^{d/2+k+1/2}(\R^d)$.
We consider~$p\in\{0,1,2\}$ and obtain the following expected convergence rates, summarized in~\Cref{tab:wendland-kernels-regularity}, which are similar to the ones for the~Mat\'{e}rn kernels, see~\Cref{tab:matern-kernels-regularity}.
\par
\begin{table}[htbp]
	\centering
	\begin{tabular}{*{6}{c}}
		\toprule
		\multirow{2}{*}{$p$} & \multirow{2}{*}{$\tau$} & \multicolumn{2}{c}{\makecell{Convergence rate\\for regularity~$m\geq\tau$}} & \multicolumn{2}{c}{\makecell{Convergence rate\\for regularity~$m<\tau$}} \\
		\cmidrule(lr){3-4}\cmidrule(lr){5-6}
		& & $L^2$-norm & $H^1$-norm & $L^2$-norm & $H^1$-norm \\ \midrule \midrule
		$0$ & $3/2$ & $3/2$ & $1/2$ & $m$ & $m-1$ \\
		$1$ & $5/2$ & $5/2$ & $3/2$ & $m$ & $m-1$ \\
		$2$ & $7/2$ & $7/2$ & $5/2$ & $m$ & $m-1$ \\
		\bottomrule
	\end{tabular}
	\caption{Regularity of Wendland kernels for~$d=2$ with different parameters and convergence rates depending on the smoothness~$\tau$ of the kernel and the regularity~$m$ of the weak solution, i.e.~$k$ has an algebraically decaying Fourier transform of order~$\tau$ and we have~$u\in H^m(\Omega)$. For the convergence rates with respect to the~$L^2$-norm we disregard the assumption~$m\geq2$ from~\Cref{thm:a-priori-error-estimate-l2} in the overview in this table.}
	\label{tab:wendland-kernels-regularity}
\end{table}
Apart from the choice of the kernel, all training parameters are chosen the same as in~\Cref{sec:numerical-experiments}.
In the following subsections, we briefly summarize the results obtained for the examples from above when applying Wendland kernels of different regularities.

\subsection{Diffusion equation with a solution of high regularity}
We first consider the example from~\Cref{sec:high-regularity} where the solution is smooth and the convergence rates of the deep Ritz method is dominated by the kernel regularity.
The relative errors in~$L^2$- and~$H^1$-norm are shown in~\Cref{fig:errors-higher-regularity-wendland}.
\pgfplotstableread[skip first n=5]{results_interpolation_smooth_solution_wendland_errors_k_0.txt}\tab%
\pgfplotstablecreatecol[create col/expr={ln(\thisrowno{1})/ln(10)}]{logh}\tab%
\pgfplotstablecreatecol[create col/expr={ln(\thisrowno{3})/ln(10)}]{logL2}\tab%
\pgfplotstablecreatecol[linear regression={x=logh, y=logL2}]{regr}\tab%
\xdef\slopekzero{\pgfplotstableregressiona}\xdef\interceptkzero{\pgfplotstableregressionb}%
\pgfplotstableread[skip first n=5]{results_interpolation_smooth_solution_wendland_errors_k_1.txt}\tab%
\pgfplotstablecreatecol[create col/expr={ln(\thisrowno{1})/ln(10)}]{logh}\tab%
\pgfplotstablecreatecol[create col/expr={ln(\thisrowno{3})/ln(10)}]{logL2}\tab%
\pgfplotstablecreatecol[linear regression={x=logh, y=logL2}]{regr}\tab%
\xdef\slopekone{\pgfplotstableregressiona}\xdef\interceptkone{\pgfplotstableregressionb}%
\pgfplotstableread[skip first n=5]{results_interpolation_smooth_solution_wendland_errors_k_2.txt}\tab%
\pgfplotstablecreatecol[create col/expr={ln(\thisrowno{1})/ln(10)}]{logh}\tab%
\pgfplotstablecreatecol[create col/expr={ln(\thisrowno{3})/ln(10)}]{logL2}\tab%
\pgfplotstablecreatecol[linear regression={x=logh, y=logL2}]{regr}\tab%
\xdef\slopektwo{\pgfplotstableregressiona}\xdef\interceptktwo{\pgfplotstableregressionb}%
\pgfplotstableread[skip first n=5]{results_interpolation_smooth_solution_wendland_errors_k_0.txt}\tab%
\pgfplotstablecreatecol[create col/expr={ln(\thisrowno{1})/ln(10)}]{logh}\tab%
\pgfplotstablecreatecol[create col/expr={ln(\thisrowno{4})/ln(10)}]{logL2}\tab%
\pgfplotstablecreatecol[linear regression={x=logh, y=logL2}]{regr}\tab%
\xdef\slopekzeroh{\pgfplotstableregressiona}\xdef\interceptkzeroh{\pgfplotstableregressionb}%
\pgfplotstableread[skip first n=5]{results_interpolation_smooth_solution_wendland_errors_k_1.txt}\tab%
\pgfplotstablecreatecol[create col/expr={ln(\thisrowno{1})/ln(10)}]{logh}\tab%
\pgfplotstablecreatecol[create col/expr={ln(\thisrowno{4})/ln(10)}]{logL2}\tab%
\pgfplotstablecreatecol[linear regression={x=logh, y=logL2}]{regr}\tab%
\xdef\slopekoneh{\pgfplotstableregressiona}\xdef\interceptkoneh{\pgfplotstableregressionb}%
\pgfplotstableread[skip first n=5]{results_interpolation_smooth_solution_wendland_errors_k_2.txt}\tab%
\pgfplotstablecreatecol[create col/expr={ln(\thisrowno{1})/ln(10)}]{logh}\tab%
\pgfplotstablecreatecol[create col/expr={ln(\thisrowno{4})/ln(10)}]{logL2}\tab%
\pgfplotstablecreatecol[linear regression={x=logh, y=logL2}]{regr}\tab%
\xdef\slopektwoh{\pgfplotstableregressiona}\xdef\interceptktwoh{\pgfplotstableregressionb}%
\begin{figure}[htbp]%
	\begin{minipage}[b]{.48\textwidth}
		\centering%
		\begin{tikzpicture}
			\begin{axis}[
				DefaultStyle,
				log basis x={10},
				log basis y={10},
				xlabel={Mesh norm~$h$},
				xmin=0.04, xmax=0.55,
				xmode=log,
				xtick={0.5,0.25,0.1,0.05},
				xticklabels={0.5,0.25,0.1,0.05},
				ylabel={Relative $L^2$-error},
				ymin=1e-4,
				ymax=1,
				ymode=log,
				ytick={1e-08,1e-07,1e-06,1e-05,0.0001,0.001,0.01,0.1,1},
				legend style={at={(1.1,1.05)}, anchor=south},
				legend columns=3,
				transpose legend,
				width=\textwidth,
				]
				\addplot[domain=0.04:0.55, samples=2, thick, forget plot] {10^(\interceptkzero) * x^(\slopekzero)};
				\slopetrileft{\slopekzero}{\interceptkzero}{0.075}{0.2}{\pgfmathprintnumber{\slopekzero}}
				\addplot[thick, colork0, mark=x, mark size=3, mark options={solid}, dotted] table[x index=1, y index=3] {results_interpolation_smooth_solution_wendland_errors_k_0.txt};
				\addlegendentry{$p=0$: interpolation}
				\addplot[domain=0.04:0.55, samples=2, thick, forget plot] {10^(\interceptkone) * x^(\slopekone)};
				\slopetriright{\slopekone}{\interceptkone}{0.15}{0.275}{\pgfmathprintnumber{\slopekone}}
				\addplot[thick, colork1, mark=x, mark size=3, mark options={solid}, dotted] table[x index=1, y index=3] {results_interpolation_smooth_solution_wendland_errors_k_1.txt};
				\addlegendentry{$p=1$: interpolation}
				\addplot[domain=0.04:0.55, samples=2, thick, forget plot] {10^(\interceptktwo) * x^(\slopektwo)};
				\slopetriright{\slopektwo}{\interceptktwo}{0.075}{0.125}{\pgfmathprintnumber{\slopektwo}}
				\addplot[thick, colork2, mark=x, mark size=3, mark options={solid}, dotted] table[x index=1, y index=3] {results_interpolation_smooth_solution_wendland_errors_k_2.txt};
				\addlegendentry{$p=2$: interpolation}
				\addplot[thick, colork0, mark=*, mark size=2, mark options={solid}] table[x index=1, y index=3] {results_smooth_solution_wendland_errors_k_0.txt};
				\addlegendentry{$p=0$: deep Ritz with kernels}
				\addplot[thick, colork1, mark=*, mark size=2, mark options={solid}] table[x index=1, y index=3] {results_smooth_solution_wendland_errors_k_1.txt};
				\addlegendentry{$p=1$: deep Ritz with kernels}
				\addplot[thick, colork2, mark=*, mark size=2, mark options={solid}] table[x index=1, y index=3] {results_smooth_solution_wendland_errors_k_2.txt};
				\addlegendentry{$p=2$: deep Ritz with kernels}
			\end{axis}
		\end{tikzpicture}%
	\end{minipage}%
	\hfill%
	\begin{minipage}[b]{.48\textwidth}
		\centering%
		\begin{tikzpicture}
			\begin{axis}[
				DefaultStyle,
				log basis x={10},
				log basis y={10},
				xlabel={Mesh norm~$h$},
				xmin=0.04, xmax=0.55,
				xmode=log,
				xtick={0.5,0.25,0.1,0.05},
				xticklabels={0.5,0.25,0.1,0.05},
				ylabel={Relative $H^1$-error},
				ymin=1e-4,
				ymax=1,
				ymode=log,
				ytick={1e-08,1e-07,1e-06,1e-05,0.0001,0.001,0.01,0.1,1},
				width=\textwidth,
				]
				\addplot[domain=0.04:0.55, samples=2, thick, forget plot] {10^(\interceptkzeroh) * x^(\slopekzeroh)};
				\slopetrileft{\slopekzeroh}{\interceptkzeroh}{0.075}{0.2}{\pgfmathprintnumber{\slopekzeroh}}
				\addplot[thick, colork0, mark=x, mark size=3, mark options={solid}, dotted] table[x index=1, y index=4] {results_interpolation_smooth_solution_wendland_errors_k_0.txt};
				\addplot[thick, colork0, mark=*, mark size=2, mark options={solid}] table[x index=1, y index=4] {results_smooth_solution_wendland_errors_k_0.txt};
				\addplot[domain=0.04:0.55, samples=2, thick, forget plot] {10^(\interceptkoneh) * x^(\slopekoneh)};
				\slopetriright{\slopekoneh}{\interceptkoneh}{0.125}{0.275}{\pgfmathprintnumber{\slopekoneh}}
				\addplot[thick, colork1, mark=x, mark size=3, mark options={solid}, dotted] table[x index=1, y index=4] {results_interpolation_smooth_solution_wendland_errors_k_1.txt};
				\addplot[thick, colork1, mark=*, mark size=2, mark options={solid}] table[x index=1, y index=4] {results_smooth_solution_wendland_errors_k_1.txt};
				\addplot[domain=0.04:0.55, samples=2, thick, forget plot] {10^(\interceptktwoh) * x^(\slopektwoh)};
				\slopetriright{\slopektwoh}{\interceptktwoh}{0.05}{0.1}{\pgfmathprintnumber{\slopektwoh}}
				\addplot[thick, colork2, mark=x, mark size=3, mark options={solid}, dotted] table[x index=1, y index=4] {results_interpolation_smooth_solution_wendland_errors_k_2.txt};
				\addplot[thick, colork2, mark=*, mark size=2, mark options={solid}] table[x index=1, y index=4] {results_smooth_solution_wendland_errors_k_2.txt};
			\end{axis}
		\end{tikzpicture}%
	\end{minipage}%
	\caption{Relative errors in the~$L^2$-norm (left) and the~$H^1$-norm (right) with respect to the mesh norm~$h$ for~Wendland kernels of smoothness~$p \in \{0,1,2\}$ in the example with a smooth solution. The dashed lines indicate the error decays of the interpolation of the exact solution by the respective kernel. The estimated convergence rates for the interpolated solutions are shown as black lines.}
	\label{fig:errors-higher-regularity-wendland}
\end{figure}%
As for the Mat\'{e}rn kernels, we observe super-convergence and even faster convergence rates as suggested by the theoretical results in~\Cref{tab:wendland-kernels-regularity}.
The deep Ritz method is able to match the interpolation results asymptotically and errors are even smaller than for the interpolation.
However, we also mention the slight instabilities in the optimization for~$p=2$ when using a small mesh norm.
Similar issues were also observed for the Mat\'{e}rn kernel.
\par
Similar to the presentation in~\Cref{sec:numerical-experiments}, we also show the results when assembling and solving the linear system of equations directly.
The relative errors in this case are reported in~\Cref{fig:errors-higher-regularity-matrix-wendland}.
\begin{figure}[htbp]%
	\begin{minipage}[b]{.48\textwidth}
		\centering%
		\begin{tikzpicture}
			\begin{axis}[
				DefaultStyle,
				log basis x={10},
				log basis y={10},
				xlabel={Mesh norm~$h$},
				xmin=0.04, xmax=0.55,
				xmode=log,
				ylabel={Relative $L^2$-error},
				ymin=1e-4,
				ymax=1,
				ymode=log,
				xtick={0.5,0.25,0.1,0.05},
				xticklabels={0.5,0.25,0.1,0.05},
				ytick={1e-08,1e-07,1e-06,1e-05,0.0001,0.001,0.01,0.1,1},
				legend style={at={(1.075,1.05)}, anchor=south},
				legend columns=3,
				transpose legend,
				width=\textwidth,
				]
				\addplot[thick, colork0, mark=x, mark size=3, mark options={solid}, dotted] table[x index=1, y index=3] {results_matrix_form_smooth_solution_wendland_errors_k_0.txt};
				\addlegendentry{$p=0$: solution of linear system}
				\addplot[thick, colork1, mark=x, mark size=3, mark options={solid}, dotted] table[x index=1, y index=3] {results_matrix_form_smooth_solution_wendland_errors_k_1.txt};
				\addlegendentry{$p=1$: solution of linear system}
				\addplot[thick, colork2, mark=x, mark size=3, mark options={solid}, dotted] table[x index=1, y index=3] {results_matrix_form_smooth_solution_wendland_errors_k_2.txt};
				\addlegendentry{$p=2$: solution of linear system}
				\addplot[thick, colork0, mark=*, mark size=2, mark options={solid}] table[x index=1, y index=3] {results_smooth_solution_wendland_errors_k_0.txt};
				\addlegendentry{$p=0$: deep Ritz with kernels}
				\addplot[thick, colork1, mark=*, mark size=2, mark options={solid}] table[x index=1, y index=3] {results_smooth_solution_wendland_errors_k_1.txt};
				\addlegendentry{$p=1$: deep Ritz with kernels}
				\addplot[thick, colork2, mark=*, mark size=2, mark options={solid}] table[x index=1, y index=3] {results_smooth_solution_wendland_errors_k_2.txt};
				\addlegendentry{$p=2$: deep Ritz with kernels}
			\end{axis}
		\end{tikzpicture}%
	\end{minipage}%
	\hfill%
	\begin{minipage}[b]{.48\textwidth}
		\centering%
		\begin{tikzpicture}
			\begin{axis}[
				DefaultStyle,
				log basis x={10},
				log basis y={10},
				xlabel={Mesh norm~$h$},
				xmin=0.04, xmax=0.55,
				xmode=log,
				xtick={0.5,0.25,0.1,0.05},
				xticklabels={0.5,0.25,0.1,0.05},
				ylabel={Relative $H^1$-error},
				ymin=1e-4,
				ymax=1,
				ymode=log,
				ytick={1e-08,1e-07,1e-06,1e-05,0.0001,0.001,0.01,0.1,1},
				width=\textwidth,
				]
				\addplot[thick, colork0, mark=x, mark size=3, mark options={solid}, dotted] table[x index=1, y index=4] {results_matrix_form_smooth_solution_wendland_errors_k_0.txt};
				\addplot[thick, colork0, mark=*, mark size=2, mark options={solid}] table[x index=1, y index=4] {results_smooth_solution_wendland_errors_k_0.txt};
				\addplot[thick, colork1, mark=x, mark size=3, mark options={solid}, dotted] table[x index=1, y index=4] {results_matrix_form_smooth_solution_wendland_errors_k_1.txt};
				\addplot[thick, colork1, mark=*, mark size=2, mark options={solid}] table[x index=1, y index=4] {results_smooth_solution_wendland_errors_k_1.txt};
				\addplot[thick, colork2, mark=x, mark size=3, mark options={solid}, dotted] table[x index=1, y index=4] {results_matrix_form_smooth_solution_wendland_errors_k_2.txt};
				\addplot[thick, colork2, mark=*, mark size=2, mark options={solid}] table[x index=1, y index=4] {results_smooth_solution_wendland_errors_k_2.txt};
			\end{axis}
		\end{tikzpicture}%
	\end{minipage}%
	\caption{Relative errors in the~$L^2$-norm (left) and the~$H^1$-norm (right) with respect to the mesh norm~$h$ for~Wendland kernels of smoothness~$p \in \{0,1,2\}$ using the energy minimization problem and the linear system of equations in the example with a smooth solution.}
	\label{fig:errors-higher-regularity-matrix-wendland}
\end{figure}%
We observe that the errors stagnate for a mesh norm of roughly~$0.1$ and smaller when using the matrix-form, whereas optimizing the Dirichlet energy iteratively with random quadrature points seems to lead again to a more consistent convergence behavior.

\subsection{Singular solution on a non-convex domain}
The numerical results for Wendland kernels in the example with a singular solution from~\Cref{sec:singular-solution} are summarized in this section.
In this example, due to the lack of regularity of the solution, the convergence rates are supposed to be independent of the kernel smoothness, as already observed in~\Cref{sec:singular-solution}.
Relative errors for the interpolation of the exact solution and for the deep Ritz solution are provided in~\Cref{fig:errors-singular-wendland}.
\pgfplotstableread[skip first n=5]{results_interpolation_singular_solution_wendland_errors_k_2.txt}\tab%
\pgfplotstablecreatecol[create col/expr={ln(\thisrowno{1})/ln(10)}]{logh}\tab%
\pgfplotstablecreatecol[create col/expr={ln(\thisrowno{3})/ln(10)}]{logL2}\tab%
\pgfplotstablecreatecol[linear regression={x=logh, y=logL2}]{regr}\tab%
\xdef\slopektwo{\pgfplotstableregressiona}\xdef\interceptktwo{\pgfplotstableregressionb}%
\pgfplotstableread[skip first n=5]{results_interpolation_singular_solution_wendland_errors_k_2.txt}\tab%
\pgfplotstablecreatecol[create col/expr={ln(\thisrowno{1})/ln(10)}]{logh}\tab%
\pgfplotstablecreatecol[create col/expr={ln(\thisrowno{4})/ln(10)}]{logL2}\tab%
\pgfplotstablecreatecol[linear regression={x=logh, y=logL2}]{regr}\tab%
\xdef\slopektwoh{\pgfplotstableregressiona}\xdef\interceptktwoh{\pgfplotstableregressionb}%
\begin{figure}[htbp]%
	\begin{minipage}[b]{.48\textwidth}
		\centering%
		\begin{tikzpicture}
			\begin{axis}[
				DefaultStyle,
				log basis x={10},
				log basis y={10},
				xlabel={Mesh norm~$h$},
				xmin=0.04, xmax=0.55,
				xmode=log,
				xtick={0.5,0.25,0.1,0.05},
				xticklabels={0.5,0.25,0.1,0.05},
				ylabel={Relative $L^2$-error},
				ymin=1e-3,
				ymax=2,
				ymode=log,
				ytick={1e-08,1e-07,1e-06,1e-05,0.0001,0.001,0.01,0.1,1},
				legend style={at={(1.1,1.05)}, anchor=south},
				legend columns=3,
				transpose legend,
				width=\textwidth,
				]
				\addplot[thick, colork0, mark=x, mark size=3, mark options={solid}, dotted] table[x index=1, y index=3] {results_interpolation_singular_solution_wendland_errors_k_0.txt};
				\addlegendentry{$p=0$: interpolation}
				\addplot[thick, colork1, mark=x, mark size=3, mark options={solid}, dotted] table[x index=1, y index=3] {results_interpolation_singular_solution_wendland_errors_k_1.txt};
				\addlegendentry{$p=1$: interpolation}
				\addplot[thick, colork2, mark=x, mark size=3, mark options={solid}, dotted] table[x index=1, y index=3] {results_interpolation_singular_solution_wendland_errors_k_2.txt};
				\addlegendentry{$p=2$: interpolation}
				\addplot[thick, colork0, mark=*, mark size=2, mark options={solid}] table[x index=1, y index=3] {results_singular_solution_wendland_errors_k_0.txt};
				\addlegendentry{$p=0$: deep Ritz with kernels}
				\addplot[thick, colork1, mark=*, mark size=2, mark options={solid}] table[x index=1, y index=3] {results_singular_solution_wendland_errors_k_1.txt};
				\addlegendentry{$p=1$: deep Ritz with kernels}
				\addplot[domain=0.04:0.55, samples=2, thick, forget plot] {10^(\interceptktwo) * x^(\slopektwo)};
				\slopetrileft{\slopektwo}{\interceptktwo}{0.075}{0.2}{\pgfmathprintnumber{\slopektwo}}
				\addplot[thick, colork2, mark=*, mark size=2, mark options={solid}] table[x index=1, y index=3] {results_singular_solution_wendland_errors_k_2.txt};
				\addlegendentry{$p=2$: deep Ritz with kernels}
			\end{axis}
		\end{tikzpicture}%
	\end{minipage}%
	\hfill%
	\begin{minipage}[b]{.48\textwidth}
		\centering%
		\begin{tikzpicture}
			\begin{axis}[
				DefaultStyle,
				log basis x={10},
				log basis y={10},
				xlabel={Mesh norm~$h$},
				xmin=0.04, xmax=0.55,
				xmode=log,
				xtick={0.5,0.25,0.1,0.05},
				xticklabels={0.5,0.25,0.1,0.05},
				ylabel={Relative $H^1$-error},
				ymin=5e-2,
				ymax=2,
				ymode=log,
				ytick={0.1,0.25,0.5,1},
				yticklabels={0.1,0.25,0.5,1},
				width=\textwidth,
				]
				\addplot[thick, colork0, mark=x, mark size=3, mark options={solid}, dotted] table[x index=1, y index=4] {results_interpolation_singular_solution_wendland_errors_k_0.txt};
				\addplot[thick, colork0, mark=*, mark size=2, mark options={solid}] table[x index=1, y index=4] {results_singular_solution_wendland_errors_k_0.txt};
				\addplot[thick, colork1, mark=x, mark size=3, mark options={solid}, dotted] table[x index=1, y index=4] {results_interpolation_singular_solution_wendland_errors_k_1.txt};
				\addplot[thick, colork1, mark=*, mark size=2, mark options={solid}] table[x index=1, y index=4] {results_singular_solution_wendland_errors_k_1.txt};
				\addplot[domain=0.04:0.55, samples=2, thick, forget plot] {10^(\interceptktwoh) * x^(\slopektwoh)};
				\slopetrileft{\slopektwoh}{\interceptktwoh}{0.075}{0.2}{\pgfmathprintnumber{\slopektwoh}}
				\addplot[thick, colork2, mark=x, mark size=3, mark options={solid}, dotted] table[x index=1, y index=4] {results_interpolation_singular_solution_wendland_errors_k_2.txt};
				\addplot[thick, colork2, mark=*, mark size=2, mark options={solid}] table[x index=1, y index=4] {results_singular_solution_wendland_errors_k_2.txt};
			\end{axis}
		\end{tikzpicture}%
	\end{minipage}%
	\caption{Relative errors in the~$L^2$-norm (left) and the~$H^1$-norm (right) with respect to the mesh norm~$h$ for~Wendland kernels of smoothness~$p \in \{0,1,2\}$ in the example with a singular solution. The dashed lines indicate the error decays of the interpolation of the exact solution by the respective kernel. The estimated convergence rate for the interpolation with~$\nu=5/2$ is shown as a black line.}
	\label{fig:errors-singular-wendland}
\end{figure}%
Interestingly, we again see that the convergence rate is independent of the smoothness parameter~$p$, but at a different rate as expected from the solution regularity.
We assume that for a mesh norm of around~$h=0.05$, the asymptotic regime is not reached for Wendland kernels in this case.
However, we refrain from exploring this further in this paper.
\par
\Cref{fig:errors-singular-matrix-wendland} compares the accuracy of the deep Ritz solutions to the matrix-based ones.
\begin{figure}[htbp]%
	\begin{minipage}[b]{.48\textwidth}
		\centering%
		\begin{tikzpicture}
			\begin{axis}[
				DefaultStyle,
				log basis x={10},
				log basis y={10},
				xlabel={Mesh norm~$h$},
				xmin=0.04, xmax=0.55,
				xmode=log,
				xtick={0.5,0.25,0.1,0.05},
				xticklabels={0.5,0.25,0.1,0.05},
				ylabel={Relative $L^2$-error},
				ymin=1e-3,
				ymax=2,
				ymode=log,
				ytick={1e-08,1e-07,1e-06,1e-05,0.0001,0.001,0.01,0.1,1},
				legend style={at={(1.075,1.05)}, anchor=south},
				legend columns=3,
				transpose legend,
				width=\textwidth,
				]
				\addplot[thick, colork0, mark=x, mark size=3, mark options={solid}, dotted] table[x index=1, y index=3] {results_matrix_form_singular_solution_wendland_errors_k_0.txt};
				\addlegendentry{$p=0$: solution of linear system}
				\addplot[thick, colork1, mark=x, mark size=3, mark options={solid}, dotted] table[x index=1, y index=3] {results_matrix_form_singular_solution_wendland_errors_k_1.txt};
				\addlegendentry{$p=1$: solution of linear system}
				\addplot[thick, colork2, mark=x, mark size=3, mark options={solid}, dotted] table[x index=1, y index=3] {results_matrix_form_singular_solution_wendland_errors_k_2.txt};
				\addlegendentry{$p=2$: solution of linear system}
				\addplot[thick, colork0, mark=*, mark size=2, mark options={solid}] table[x index=1, y index=3] {results_singular_solution_wendland_errors_k_0.txt};
				\addlegendentry{$p=0$: deep Ritz with kernels}
				\addplot[thick, colork1, mark=*, mark size=2, mark options={solid}] table[x index=1, y index=3] {results_singular_solution_wendland_errors_k_1.txt};
				\addlegendentry{$p=1$: deep Ritz with kernels}
				\addplot[thick, colork2, mark=*, mark size=2, mark options={solid}] table[x index=1, y index=3] {results_singular_solution_wendland_errors_k_2.txt};
				\addlegendentry{$p=2$: deep Ritz with kernels}
			\end{axis}
		\end{tikzpicture}%
	\end{minipage}%
	\hfill%
	\begin{minipage}[b]{.48\textwidth}
		\centering%
		\begin{tikzpicture}
			\begin{axis}[
				DefaultStyle,
				log basis x={10},
				log basis y={10},
				xlabel={Mesh norm~$h$},
				xmin=0.04, xmax=0.55,
				xmode=log,
				xtick={0.5,0.25,0.1,0.05},
				xticklabels={0.5,0.25,0.1,0.05},
				ylabel={Relative $H^1$-error},
				ymin=5e-2,
				ymax=2,
				ymode=log,
				ytick={0.1,0.25,0.5,1},
				yticklabels={0.1,0.25,0.5,1},
				width=\textwidth,
				]
				\addplot[thick, colork0, mark=x, mark size=3, mark options={solid}, dotted] table[x index=1, y index=4] {results_matrix_form_singular_solution_wendland_errors_k_0.txt};
				\addplot[thick, colork0, mark=*, mark size=2, mark options={solid}] table[x index=1, y index=4] {results_singular_solution_wendland_errors_k_0.txt};
				\addplot[thick, colork1, mark=x, mark size=3, mark options={solid}, dotted] table[x index=1, y index=4] {results_matrix_form_singular_solution_wendland_errors_k_1.txt};
				\addplot[thick, colork1, mark=*, mark size=2, mark options={solid}] table[x index=1, y index=4] {results_singular_solution_wendland_errors_k_1.txt};
				\addplot[thick, colork2, mark=x, mark size=3, mark options={solid}, dotted] table[x index=1, y index=4] {results_matrix_form_singular_solution_wendland_errors_k_2.txt};
				\addplot[thick, colork2, mark=*, mark size=2, mark options={solid}] table[x index=1, y index=4] {results_singular_solution_wendland_errors_k_2.txt};
			\end{axis}
		\end{tikzpicture}%
	\end{minipage}%
	\caption{Relative errors in the~$L^2$-norm (left) and the~$H^1$-norm (right) with respect to the mesh norm~$h$ for~Wendland kernels of smoothness~$p \in \{0,1,2\}$ using the energy minimization problem and the linear system of equations in the example with a singular solution.}
	\label{fig:errors-singular-matrix-wendland}
\end{figure}%
Similar to the Mat\'{e}rn kernel results in~\Cref{sec:singular-solution}, the stiffness matrix is better conditioned for the singular examples.
The matrix-based solution achieves relative errors very close to those of the deep Ritz method.

\section{Numerical results for different neural network hyperparameters}
In this part of the appendix we provide additional numerical results using different hyperparameters for the neural networks.
Specifically, we investigate the influence of the activation function and of network architecture in terms of depth and width on the errors obtained in the deep Ritz method.
Apart from these two factors, all other settings remain as described in~\Cref{sec:numerical-experiments}.
\par
We consider six different standard activation functions used widely in the machine learning literature, see~\Cref{tab:activation functions}.
\begin{table}[htbp]
	\centering
	\begin{tabular}{*{3}{c}}
		\toprule
		Abbreviation & Full name & Function \\
		\midrule\midrule
		GELU & Gaussian Error Linear Unit & \makecell{$x\cdot\Phi(x)$ with~$\Phi$ the standard-normal\\cumulative distribution function} \\
		$\tanh$ & Hyperbolic tangent & $\frac{\exp(x)-\exp(-x)}{\exp(x)+\exp(-x)}$ \\
		ReLU & Rectified Linear Unit & $\max(0,x)$ \\
		SiLU & Sigmoid Linear Unit & \makecell{$x\cdot\sigma(x)$ with~$\sigma$ the logistic\\sigmoid~$\sigma(x)=\frac{1}{1+\exp(-x)}$} \\
		$\sin$ & Sinus & $\sin(x)$ \\
		softplus & Softplus function & $\ln(1+\exp(x))$ \\
		\bottomrule
	\end{tabular}
	\caption{Different activation functions used for the neural networks in the hyper\-para\-meter-sweeps.}
	\label{tab:activation functions}
\end{table}
\par
Moreover, we consider the following neural network architectures, where~\Cref{tab:nn-architectures} contains the number of hidden layers (depth), the number of neurons per layer (width, all layers have the same number of neurons) and the resulting number of trainable parameters.
We remark at this point that the number of layers has to be understood as the intermediate (hidden) layers.
For instance for depth~$2$, we have in total~$4$ weight matrices and bias vectors involved.
\begin{table}[htbp]
	\centering
	\begin{tabular}{c l l}
		\toprule
		Depth & Widths & Number of trainable parameters \\
		\midrule
		1 & 2, 3, 5, 8, 11, 16, 20 & 15, 25, \hphantom{0}51, 105, 177, 337, 501 \\
		2 & 1, 2, 3, 6, \hphantom{0}8, 12, 14 & \hphantom{0}9, 21, \hphantom{0}37, 109, 177, 361, 477 \\
		4 & 1, 2, 3, 4, \hphantom{0}6, \hphantom{0}8, 10 & 13, 33, \hphantom{0}61, \hphantom{0}97, 193, 321, 481 \\
		8 & 1, 2, 3, 4, \hphantom{0}5, \hphantom{0}6, \hphantom{0}7 & 21, 57, 109, 177, 261, 361, 477 \\
		\bottomrule
	\end{tabular}
	\caption{Neural network architectures with different numbers of layers and neurons used in the hyper\-para\-meter-sweeps.}
	\label{tab:nn-architectures}
\end{table}

\subsection{Diffusion equation with a solution of high regularity}
As first example, we consider again the diffusion problem from~\Cref{sec:high-regularity} where the solution is highly regular.
\subsubsection{Activation function}
The relative errors for the different activation functions are shown in~\Cref{fig:errors-higher-regularity-neural-networks-activation-functions}.
We here consider neural networks of fixed depth~$2$.
\begin{figure}[htbp]%
	\begin{minipage}[b]{.48\textwidth}
		\centering%
		\begin{tikzpicture}
			\begin{axis}[
				DefaultStyle,
				log basis x={10},
				log basis y={10},
				xlabel={Number of parameters},
				xmin=1,
				xmax=500,
				xtick={100,200,300,400,500},
				ylabel={Relative $L^2$-error},
				ymin=1e-3,
				ymax=1.5,
				ymode=log,
				ytick={1e-08,1e-07,1e-06,1e-05,0.0001,0.001,0.01,0.1,1},
				legend style={at={(1.19,1.05)}, anchor=south},
				legend columns=6,
				width=\textwidth,
				]
				\addplot[thick, viridisViolet, mark=triangle*, mark size=2, mark options={solid}] table[, x index=1, y index=2] {results_neural_network_smooth_solution_gelu_depth2_convergence_results.txt};
				\addlegendentry{GELU}
				\addplot[thick, viridisGreen, mark=*, mark size=2, mark options={solid}] table[x index=1, y index=2] {results_neural_network_smooth_solution_tanh_depth2_convergence_results.txt};
				\addlegendentry{$\tanh$}
				\addplot[thick, viridisTeal, mark=square*, mark size=2, mark options={solid}] table[x index=1, y index=2] {results_neural_network_smooth_solution_relu_depth2_convergence_results.txt};
				\addlegendentry{ReLU}
				\addplot[thick, viridisYellow, mark=x, mark size=2, mark options={solid}] table[x index=1, y index=2] {results_neural_network_smooth_solution_silu_depth2_convergence_results.txt};
				\addlegendentry{SiLU}
				\addplot[thick, matchingRed, mark=diamond*, mark size=2, mark options={solid}] table[x index=1, y index=2] {results_neural_network_smooth_solution_sin_depth2_convergence_results.txt};
				\addlegendentry{$\sin$}
				\addplot[thick, matchingOrange, mark=pentagon*, mark size=2, mark options={solid}] table[x index=1, y index=2] {results_neural_network_smooth_solution_softplus_depth2_convergence_results.txt};
				\addlegendentry{softplus}
			\end{axis}%
		\end{tikzpicture}%
	\end{minipage}%
	\hfill%
	\begin{minipage}[b]{.48\textwidth}
		\centering%
		\begin{tikzpicture}
			\begin{axis}[
				DefaultStyle,
				log basis x={10},
				log basis y={10},
				xlabel={Number of parameters},
				xmin=1,
				xmax=500,
				xtick={100,200,300,400,500},
				ylabel={Relative $H^1$-error},
				ymin=1e-3,
				ymax=1.5,
				ymode=log,
				ytick={1e-08,1e-07,1e-06,1e-05,0.0001,0.001,0.01,0.1,1},
				width=\textwidth,
				]
				\addplot[thick, viridisViolet, mark=triangle*, mark size=2, mark options={solid}] table[, x index=1, y index=3] {results_neural_network_smooth_solution_gelu_depth2_convergence_results.txt};
				\addplot[thick, viridisGreen, mark=*, mark size=2, mark options={solid}] table[x index=1, y index=3] {results_neural_network_smooth_solution_tanh_depth2_convergence_results.txt};
				\addplot[thick, viridisTeal, mark=square*, mark size=2, mark options={solid}] table[x index=1, y index=3] {results_neural_network_smooth_solution_relu_depth2_convergence_results.txt};
				\addplot[thick, viridisYellow, mark=x, mark size=2, mark options={solid}] table[x index=1, y index=3] {results_neural_network_smooth_solution_silu_depth2_convergence_results.txt};
				\addplot[thick, matchingRed, mark=diamond*, mark size=2, mark options={solid}] table[x index=1, y index=3] {results_neural_network_smooth_solution_sin_depth2_convergence_results.txt};
				\addplot[thick, matchingOrange, mark=pentagon*, mark size=2, mark options={solid}] table[x index=1, y index=3] {results_neural_network_smooth_solution_softplus_depth2_convergence_results.txt};
			\end{axis}%
		\end{tikzpicture}%
	\end{minipage}%
	\caption{Relative errors in the~$L^2$-norm (left) and the~$H^1$-norm (right) with respect to the number of parameters of fully-connected neural networks with depth~$2$ using different activation functions in the example with a smooth solution.}
	\label{fig:errors-higher-regularity-neural-networks-activation-functions}
\end{figure}%
In this example, we clearly observe that smoother activation functions lead to significantly better results, whereas~ReLU as a non-differentiable function stagnates at a large error level.
On the other hand, increasing the number of parameters does not show a large influence on the results in this case.
Already with around~$100$ parameters, the errors remain roughly constant for all of the smooth activation functions.
In this experiment, the~GELU activation function shows the most consistent performance with errors slightly smaller than for the other activation functions.
This motivates our choice of~GELU as primary example in~\Cref{sec:numerical-experiments}.

\subsubsection{Numbers of layers and neurons}
The results for different neural network architectures are presented in~\Cref{fig:errors-higher-regularity-neural-networks-depth} for the example with a smooth solution.
We here stick to~GELU as activation function which turned out to be the most reliable activation function in this example, see the previous results in~\Cref{fig:errors-higher-regularity-neural-networks-activation-functions}.
\begin{figure}[htbp]%
	\begin{minipage}[b]{.48\textwidth}
		\centering%
		\begin{tikzpicture}
			\begin{axis}[
				DefaultStyle,
				log basis x={10},
				log basis y={10},
				xlabel={Number of parameters},
				xmin=1,
				xmax=510,
				xtick={100,200,300,400,500},
				ylabel={Relative $L^2$-error},
				ymin=1e-3,
				ymax=1,
				ymode=log,
				ytick={1e-08,1e-07,1e-06,1e-05,0.0001,0.001,0.01,0.1,1},
				legend style={at={(1.19,1.05)}, anchor=south},
				legend columns=6,
				width=\textwidth,
				]
				\addplot[thick, viridisGreen, mark=*, mark size=2, mark options={solid}] table[, x index=1, y index=2] {results_neural_network_smooth_solution_gelu_depth1_convergence_results.txt};
				\addlegendentry{Depth $1$}
				\addplot[thick, viridisViolet, mark=triangle*, mark size=2, mark options={solid}] table[x index=1, y index=2] {results_neural_network_smooth_solution_gelu_depth2_convergence_results.txt};
				\addlegendentry{Depth $2$}
				\addplot[thick, matchingRed, mark=square*, mark size=2, mark options={solid}] table[x index=1, y index=2] {results_neural_network_smooth_solution_gelu_depth4_convergence_results.txt};
				\addlegendentry{Depth $4$}
				\addplot[thick, viridisYellow, mark=x, mark size=2, mark options={solid}] table[x index=1, y index=2] {results_neural_network_smooth_solution_gelu_depth8_convergence_results.txt};
				\addlegendentry{Depth $8$}
			\end{axis}%
		\end{tikzpicture}%
	\end{minipage}%
	\hfill%
	\begin{minipage}[b]{.48\textwidth}
		\centering%
		\begin{tikzpicture}
			\begin{axis}[
				DefaultStyle,
				log basis x={10},
				log basis y={10},
				xlabel={Number of parameters},
				xmin=1,
				xmax=510,
				xtick={100,200,300,400,500},
				ylabel={Relative $H^1$-error},
				ymin=1e-3,
				ymax=1,
				ymode=log,
				ytick={1e-08,1e-07,1e-06,1e-05,0.0001,0.001,0.01,0.1,1},
				width=\textwidth,
				]
				\addplot[thick, viridisGreen, mark=*, mark size=2, mark options={solid}] table[, x index=1, y index=3] {results_neural_network_smooth_solution_gelu_depth1_convergence_results.txt};
				\addplot[thick, viridisViolet, mark=triangle*, mark size=2, mark options={solid}] table[x index=1, y index=3] {results_neural_network_smooth_solution_gelu_depth2_convergence_results.txt};
				\addplot[thick, matchingRed, mark=square*, mark size=2, mark options={solid}] table[x index=1, y index=3] {results_neural_network_smooth_solution_gelu_depth4_convergence_results.txt};
				\addplot[thick, viridisYellow, mark=x, mark size=2, mark options={solid}] table[x index=1, y index=3] {results_neural_network_smooth_solution_gelu_depth8_convergence_results.txt};
			\end{axis}%
		\end{tikzpicture}%
	\end{minipage}%
	\caption{Relative errors in the~$L^2$-norm (left) and the~$H^1$-norm (right) with respect to the number of parameters of fully-connected neural networks with different depths in the example with a smooth solution. For every depth, the widths are adjusted in such a way that the parameter count remains comparable.}
	\label{fig:errors-higher-regularity-neural-networks-depth}
\end{figure}%
We observe that the errors are close to independent of the depth of the neural networks.
Only the depth~$4$ and depth~$8$ networks seem to be slightly less stable in their training.
We therefore suggest to use neural networks with depth up to~$2$ which do not require special treatment to obtain satisfactory results.

\subsection{Singular solution on a non-convex domain}
Finally, we also discuss the results of the hyperparameter sweep for the neural networks in the case of the example from~\Cref{sec:singular-solution}.
\subsubsection{Activation function}
\Cref{fig:errors-singular-neural-networks-activation-functions} shows the relative errors for different activation functions and fixed depth~$2$.
\begin{figure}[htbp]%
	\begin{minipage}[b]{.48\textwidth}
		\centering%
		\begin{tikzpicture}
			\begin{axis}[
				DefaultStyle,
				log basis x={10},
				log basis y={10},
				xlabel={Number of parameters},
				xmin=1,
				xmax=500,
				xtick={100,200,300,400,500},
				ylabel={Relative $L^2$-error},
				ymin=1e-3,
				ymax=0.25,
				ymode=log,
				ytick={1e-08,1e-07,1e-06,1e-05,0.0001,0.001,0.01,0.1,1},
				legend style={at={(1.19,1.05)}, anchor=south},
				legend columns=6,
				width=\textwidth,
				]
				\addplot[thick, viridisViolet, mark=triangle*, mark size=2, mark options={solid}] table[, x index=1, y index=2] {results_neural_network_singular_solution_gelu_depth2_convergence_results.txt};
				\addlegendentry{GELU}
				\addplot[thick, viridisGreen, mark=*, mark size=2, mark options={solid}] table[x index=1, y index=2] {results_neural_network_singular_solution_tanh_depth2_convergence_results.txt};
				\addlegendentry{$\tanh$}
				\addplot[thick, viridisTeal, mark=square*, mark size=2, mark options={solid}] table[x index=1, y index=2] {results_neural_network_singular_solution_relu_depth2_convergence_results.txt};
				\addlegendentry{ReLU}
				\addplot[thick, viridisYellow, mark=x, mark size=2, mark options={solid}] table[x index=1, y index=2] {results_neural_network_singular_solution_silu_depth2_convergence_results.txt};
				\addlegendentry{SiLU}
				\addplot[thick, matchingRed, mark=diamond*, mark size=2, mark options={solid}] table[x index=1, y index=2] {results_neural_network_singular_solution_sin_depth2_convergence_results.txt};
				\addlegendentry{$\sin$}
				\addplot[thick, matchingOrange, mark=pentagon*, mark size=2, mark options={solid}] table[x index=1, y index=2] {results_neural_network_singular_solution_softplus_depth2_convergence_results.txt};
				\addlegendentry{softplus}
			\end{axis}%
		\end{tikzpicture}%
	\end{minipage}%
	\hfill%
	\begin{minipage}[b]{.48\textwidth}
		\centering%
		\begin{tikzpicture}
			\begin{axis}[
				DefaultStyle,
				log basis x={10},
				log basis y={10},
				xlabel={Number of parameters},
				xmin=1,
				xmax=500,
				xtick={100,200,300,400,500},
				ylabel={Relative $H^1$-error},
				ymin=5e-2,
				ymax=0.5,
				ymode=log,
				ytick={0.1,0.25},
				yticklabels={0.1,0.25},
				legend style={at={(1.1,1.05)}, anchor=south},
				width=\textwidth,
				]
				\addplot[thick, viridisViolet, mark=triangle*, mark size=2, mark options={solid}] table[, x index=1, y index=3] {results_neural_network_singular_solution_gelu_depth2_convergence_results.txt};
				\addplot[thick, viridisGreen, mark=*, mark size=2, mark options={solid}] table[x index=1, y index=3] {results_neural_network_singular_solution_tanh_depth2_convergence_results.txt};
				\addplot[thick, viridisTeal, mark=square*, mark size=2, mark options={solid}] table[x index=1, y index=3] {results_neural_network_singular_solution_relu_depth2_convergence_results.txt};
				\addplot[thick, viridisYellow, mark=x, mark size=2, mark options={solid}] table[x index=1, y index=3] {results_neural_network_singular_solution_silu_depth2_convergence_results.txt};
				\addplot[thick, matchingRed, mark=diamond*, mark size=2, mark options={solid}] table[x index=1, y index=3] {results_neural_network_singular_solution_sin_depth2_convergence_results.txt};
				\addplot[thick, matchingOrange, mark=pentagon*, mark size=2, mark options={solid}] table[x index=1, y index=3] {results_neural_network_singular_solution_softplus_depth2_convergence_results.txt};
			\end{axis}%
		\end{tikzpicture}%
	\end{minipage}%
	\caption{Relative errors in the~$L^2$-norm (left) and the~$H^1$-norm (right) with respect to the number of parameters of fully-connected neural networks with depth~$2$ using different activation functions in the example with a singular solution.}
	\label{fig:errors-singular-neural-networks-activation-functions}
\end{figure}%
In this example, the differences are again relatively small between the activation functions.
Surprisingly, also for this less regular solution, ReLU yields the least accurate approximations independent of the number of parameters.
We suppose that this is related to the relatively large part of the domain where the solution is smooth, see~\Cref{fig:solution-plot-singular}, which renders regular activation functions slightly better suited.
Both, GELU and~$\tanh$ perform quite similar and we decided, for consistency with the first example, to use~GELU as activation function for all further tests.

\subsubsection{Numbers of layers and neurons}
For the different architectures, \Cref{fig:errors-singular-neural-networks-depth} summarizes the results in the example with a singular solution on a non-convex domain using again~GELU as activation function.
\begin{figure}[htbp]%
	\begin{minipage}[b]{.48\textwidth}
		\centering%
		\begin{tikzpicture}
			\begin{axis}[
				DefaultStyle,
				log basis x={10},
				log basis y={10},
				xlabel={Number of parameters},
				xmin=1,
				xmax=510,
				xtick={100,200,300,400,500},
				ylabel={Relative $L^2$-error},
				ymin=1e-3,
				ymax=0.25,
				ymode=log,
				ytick={1e-08,1e-07,1e-06,1e-05,0.0001,0.001,0.01,0.1,1},
				legend style={at={(1.19,1.05)}, anchor=south},
				legend columns=6,
				width=\textwidth,
				]
				\addplot[thick, viridisGreen, mark=*, mark size=2, mark options={solid}] table[, x index=1, y index=2] {results_neural_network_singular_solution_gelu_depth1_convergence_results.txt};
				\addlegendentry{Depth $1$}
				\addplot[thick, viridisViolet, mark=triangle*, mark size=2, mark options={solid}] table[x index=1, y index=2] {results_neural_network_singular_solution_gelu_depth2_convergence_results.txt};
				\addlegendentry{Depth $2$}
				\addplot[thick, matchingRed, mark=square*, mark size=2, mark options={solid}] table[x index=1, y index=2] {results_neural_network_singular_solution_gelu_depth4_convergence_results.txt};
				\addlegendentry{Depth $4$}
				\addplot[thick, viridisYellow, mark=x, mark size=2, mark options={solid}] table[x index=1, y index=2] {results_neural_network_singular_solution_gelu_depth8_convergence_results.txt};
				\addlegendentry{Depth $8$}
			\end{axis}%
		\end{tikzpicture}%
	\end{minipage}%
	\hfill%
	\begin{minipage}[b]{.48\textwidth}
		\centering%
		\begin{tikzpicture}
			\begin{axis}[
				DefaultStyle,
				log basis x={10},
				log basis y={10},
				xlabel={Number of parameters},
				xmin=1,
				xmax=510,
				xtick={100,200,300,400,500},
				ylabel={Relative $H^1$-error},
				ymin=5e-2,
				ymax=0.5,
				ymode=log,
				ytick={0.1,0.25},
				yticklabels={0.1,0.25},
				width=\textwidth,
				]
				\addplot[thick, viridisGreen, mark=*, mark size=2, mark options={solid}] table[, x index=1, y index=3] {results_neural_network_singular_solution_gelu_depth1_convergence_results.txt};
				\addplot[thick, viridisViolet, mark=triangle*, mark size=2, mark options={solid}] table[x index=1, y index=3] {results_neural_network_singular_solution_gelu_depth2_convergence_results.txt};
				\addplot[thick, matchingRed, mark=square*, mark size=2, mark options={solid}] table[x index=1, y index=3] {results_neural_network_singular_solution_gelu_depth4_convergence_results.txt};
				\addplot[thick, viridisYellow, mark=x, mark size=2, mark options={solid}] table[x index=1, y index=3] {results_neural_network_singular_solution_gelu_depth8_convergence_results.txt};
			\end{axis}%
		\end{tikzpicture}%
	\end{minipage}%
	\caption{Relative errors in the~$L^2$-norm (left) and the~$H^1$-norm (right) with respect to the number of parameters of fully-connected neural networks with different depths in the example with a singular solution. For every depth, the widths are adjusted in such a way that the parameter count remains comparable.}
	\label{fig:errors-singular-neural-networks-depth}
\end{figure}%
As we already observed in~\Cref{fig:errors-higher-regularity-neural-networks-depth}, using a depth of~$8$ leads to optimization issues and instabilities.
In this case, we make the same observation also for depth~$4$.
The results for depth~$1$ and~$2$ are quite similar with slightly smaller errors obtained when employing a depth of~$2$.

\bibliographystyle{plainnat}
\bibliography{references}

\end{document}